\newcommand{\tA}{\mathbf{\theta_a}}
\newcommand{\tB}{\mathbf{\theta_b}}
\newcommand{\tC}{\mathbf{\theta_c}}
\newcommand{\bbn}{\mathbf{n}}
\newcommand{\real}{\mathbb{R}}
\newcommand{\dd}{\mathrm{d}}
\newcommand{\beq}{\begin{equation}}
\newcommand{\eeq}{\end{equation}}
\newcommand{\beqas}{\begin{eqnarray*}}
\newcommand{\eeqas}{\end{eqnarray*}}
   \newtheorem{theorem}{Theorem}
   \newtheorem{corollary}[theorem]{Corollary}
   \newtheorem{lemma}[theorem]{Lemma}
   \newtheorem{definition}[theorem]{Definition}
 \theoremstyle{remark}
   \newtheorem{remark}[theorem]{Remark}
\newcounter{case}
\begin{document}
\title{Tileable Surfaces}
\author{David Brander}
\address{Department of Applied Mathematics and Computer Science,\\
Technical University of Denmark\\
 Matematiktorvet, Building 303 B\\
DK-2800 Kgs. Lyngby\\ Denmark}
\email{dbra@dtu.dk}

\author{Jens Gravesen}
\address{Department of Applied Mathematics and Computer Science,\\
Technical University of Denmark\\
 Matematiktorvet, Building 303 B\\
DK-2800 Kgs. Lyngby\\ Denmark}
\email{jgra@dtu.dk}
\date{\today}				

\keywords{Tessellations, tilings, tilings of curved surfaces,surfaces in Euclidean space}
\subjclass[2000]{Primary 53A05; Secondary 52C20; 52C22; 05B45; 51M20}
\thanks{Research partly supported by  Horizon-EIC-2023 pathfinder challenge 01 project number 101161085 \emph{STACK}}
\begin{abstract}
We study $C^1$-regular surfaces in $\real^3$ that admit tilings by a finite number of rigid motion congruence classes of tiles.   
We construct examples with various topologies and present a framework for a systematic study, mainly concentrating on
monotilings.
A \emph{finite edge} prototile is a tile that has only a finite number of possible interfaces with adjacent copies of
itself.  We describe all monotilings by such tiles with three or less edges.
We consider the question of whether a monohedral polyhedron can be ``smoothed'' to become a finite edge type
tileable surface with the same graph structure, and we give an example where
this is not possible.   Finally we list some open problems.

\end{abstract}

\maketitle

\section{Introduction}
Tilings of the plane, sphere and hyperbolic space are well known.
But if we consider physical manifestations of tiling, that is, a figure or object composed of many copies of the same shape, or a limited number of distinct shapes, there is no special reason why a given tile should be of constant curvature, or that the total space need be homogeneous {(i.e., possessing global isometries mapping any point to any other point)}. In manufacturing and architectural scenarios, there is  an economic case for constructing a surface from many copies of a limited number of distinct shapes or tiles.  Recently, there has been some serious work related to this problem, in the special case of polyhedral surfaces and variants -- see the literature review in Section \ref{review} below.

\begin{figure}[h!tbp]
	\begin{center}
				\includegraphics[height=26mm]{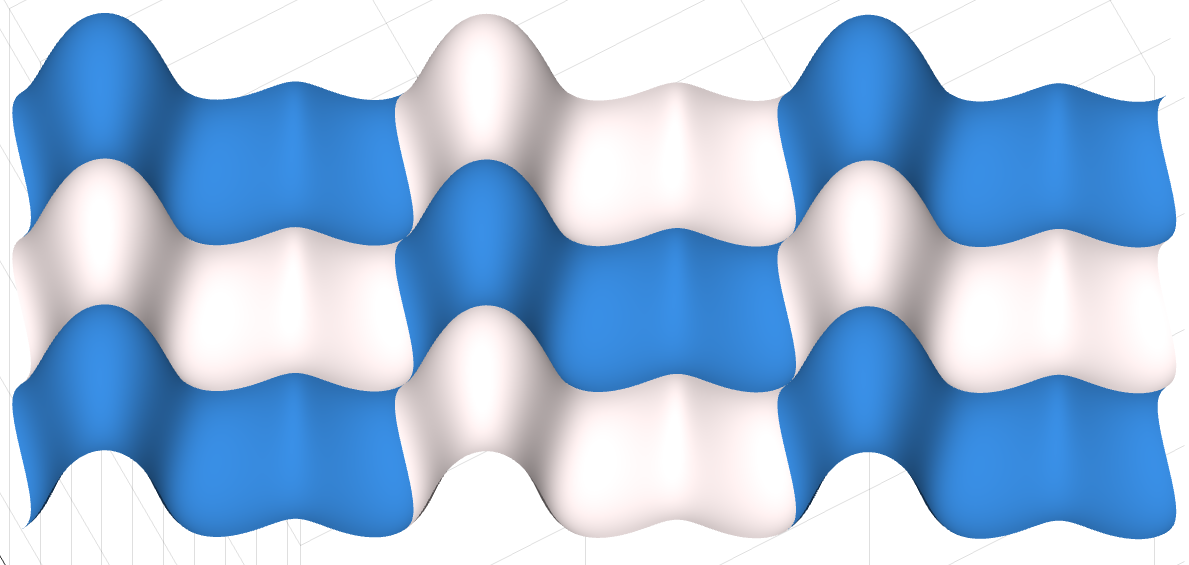}  	 \quad    \quad
  \includegraphics[height=26mm]{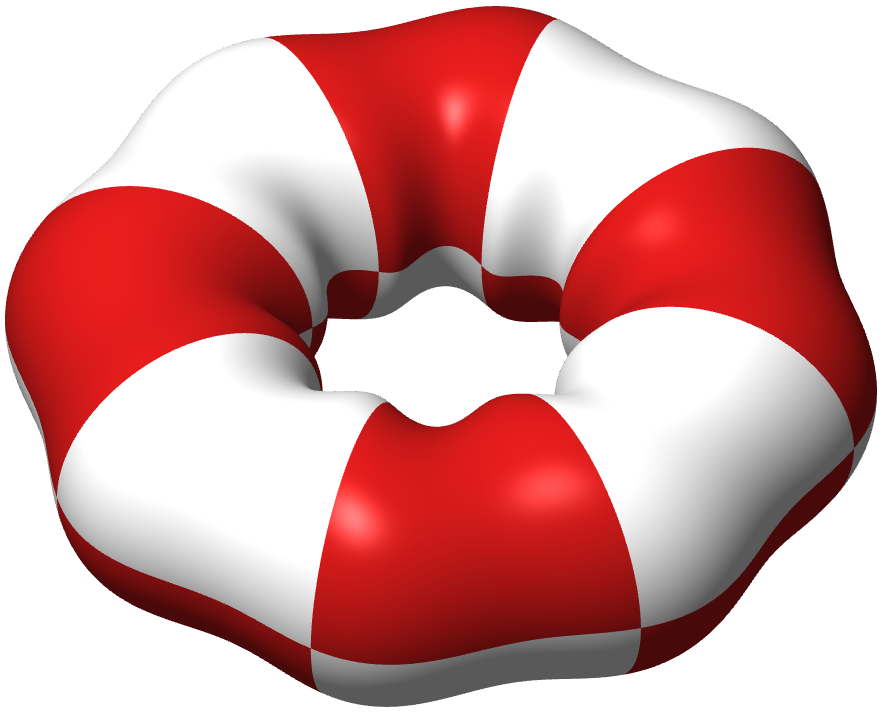}  				
   \quad   \quad
  \includegraphics[height=26mm]{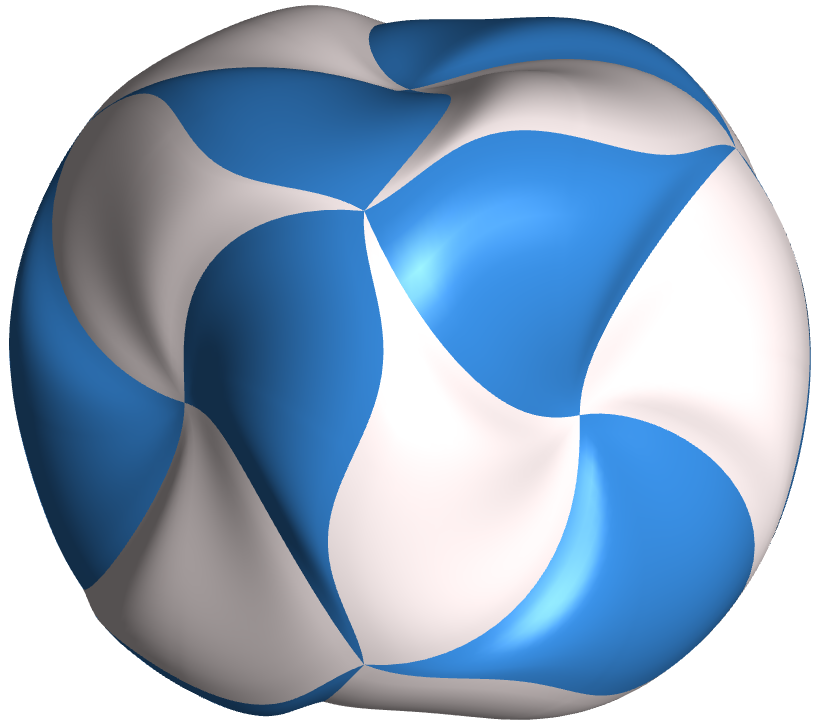}  														
		\end{center}
	\caption{Examples of monotiled surfaces.}
	\label{fig1}
\end{figure}

In this article we study a generalization of tilings to smooth, non-homogeneous, surfaces.
Unlike the case of polyhedra, we require the tiles to fit together in a tangent
continuous way to  form a regular curved surface of class $C^k$, where, $k \geq 1$, such as those shown in Figure \ref{fig1}.
Our goal is to set up a framework for studying these surfaces and to obtain a sense for how they are related to
tilings of the plane or sphere (which they obviously generalize) and to monohedral polyhedral surfaces.

\begin{definition} \label{tilingdef}
Let $S$ be a closed, connected,  $C^k$-embedded surface, possibly with boundary, in $\real^3$.
We will say that $S$ is \emph{(mono)tileable} by the \emph{prototile} $T$, if it can be written
\beq \label{decomp1}
S=\cup_{i=1}^{m} T_{i},
\eeq
where {$m \in {\mathbb N} \cup \{ \infty\}$ is an extended natural number} and,  
\begin{enumerate}
\item \label{cond1} the prototile $T \subset \real^3$ is homeomorphic to a closed topological disc,
\item \label{cond2} distinct sets $T_{i}$ (tiles) intersect only in subsets of their boundaries,
and any tile intersects only a finite number of other tiles,
\item \label{cond3} each of the sets $T_{i}$, is congruent to $T$ via a rigid motion of the ambient space $\real^3$.
\end{enumerate}
If $S$ is oriented and all the rigid motions $T \to T_i$ are orientation preserving, then we call the decomposition an \emph{oriented tiling}, and $S$ an \emph{orientably tileable} surface.
\end{definition}
Definition \ref{tilingdef} can be generalized in the obvious way to the case that there are $n$ model 
tiles instead of $1$ (an \emph{$n$-tileable} surface), but we will mainly discuss the case $n=1$ in this article.
We will also focus on \emph{oriented} tilings, because these correspond to actual physical tiles which have two distinct sides separated by some thickness, so that one side is at least slightly different from the other geometrically, and may also differ even further for practical reasons.   

Even if we allow for a large (finite) number $n$ of prototiles, it is easy to see that there are restrictions on the global shape of a tileable surface. For instance {(see, e.g.  \cite{docarmo1, lee2003smooth} for terminology)}:
\begin{theorem}
If $S$ is a \emph{complete} $C^2$-embedded $n$-tileable surface with non-vanishing curvature, 
then $S$ is a topological sphere.
\end{theorem}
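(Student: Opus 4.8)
The plan is to use the tiling to upgrade the pointwise hypothesis $K\neq 0$ (throughout, $K$ denotes the Gaussian curvature) into a \emph{uniform} estimate $|K|\ge c>0$ on all of $S$, and then to invoke classical global surface theory. At the outset one should fix the meaning of \emph{complete}: it must be read as geodesic completeness, so that $\partial S=\emptyset$; this is needed, since otherwise a closed hemisphere --- metrically complete, trivially $n$-tileable, of constant positive curvature, and homeomorphic to a disc --- would be a counterexample. I would state this convention explicitly.

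First I would establish the uniform bound. Let $T^{(1)},\dots,T^{(n)}$ be the prototiles; each is homeomorphic to a closed disc, hence compact, and by condition \ref{cond3} every tile $T_i$ in the decomposition $S=\cup_i T_i$ is a rigid-motion image of some $T^{(j)}$. Choosing, for each $j$ that occurs, one tile congruent to $T^{(j)}$ exhibits $T^{(j)}$ as congruent to a subset of the $C^2$ surface $S$; transporting the Gaussian curvature of $S$ back along the rigid motion equips $T^{(j)}$ with a continuous function $\kappa^{(j)}\colon T^{(j)}\to\real$. Since rigid motions of $\real^3$ preserve Gaussian curvature, the value of $K$ at a point of \emph{any} tile congruent to $T^{(j)}$ equals $\kappa^{(j)}$ at the corresponding point of $T^{(j)}$. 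As $S$ has non-vanishing curvature, each $\kappa^{(j)}$ is nowhere zero on the compact set $T^{(j)}$, so $c:=\min_j\min_{T^{(j)}}|\kappa^{(j)}|>0$; and as the tiles cover $S$, every point of $S$ corresponds to a point of some $T^{(j)}$, so $|K|\ge c$ on $S$. (Only the covering property and the congruences are used here; local finiteness of the tiling plays no role.)

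Next I would read off the topology. Since $S$ is connected and $K$ is continuous and nowhere zero, $K$ has a constant sign. If that sign were negative, then $K\le -c<0$ on a complete surface in $\real^3$, which is impossible by Efimov's theorem; hence $K\ge c>0$. By the Bonnet--Myers theorem a complete surface with $K\ge c>0$ is compact, and a compact connected $C^2$ surface in $\real^3$ with positive Gaussian curvature is, by Hadamard's theorem, the boundary of a convex body, in particular homeomorphic to $S^2$. (Alternatively, a point of $S$ at maximal distance from a fixed point of $\real^3$ is elliptic, so the constant sign must be positive, and convexity then follows.) This is the assertion.

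The substantive inputs are thus Efimov's theorem, which rules out complete negatively curved examples, and Hadamard's theorem, which delivers the sphere; everything else is bookkeeping once $|K|\ge c$ is in hand. I expect the only delicate point in the write-up to be making precise that a prototile $T^{(j)}$, \emph{a priori} just a compact subset of $\real^3$, genuinely carries a well-defined continuous Gaussian curvature inherited from any tile congruent to it, so that $\min_{T^{(j)}}|\kappa^{(j)}|$ is meaningful. If one wished to avoid Efimov's theorem, the hard part would instead be to exclude, by tiling-specific arguments, a complete non-compact surface assembled from infinitely many congruent negatively curved tiles.
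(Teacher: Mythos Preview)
Your argument is correct and follows the same route as the paper: compactness of the finitely many prototiles yields a uniform bound $|K|\ge c>0$, Efimov's theorem rules out the negative-curvature case, and Bonnet--Myers forces compactness in the positive case. The only inessential difference is the last step---you invoke Hadamard's convexity theorem, while the paper uses Gauss--Bonnet to conclude $\chi(S)>0$.
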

To see that this holds, note that,  since $S$ is tiled by a finite number of distinct compact prototiles, the curvature must achieve both its maximum and minimum values, and hence be bounded away from zero.  By Efimov's theorem \cite{efimov1968hyperbolic}, there is no complete $C^2$-immersed surface in $\real^3$ with curvature bounded above by a negative constant.
If the curvature is positive  the surface must be compact by the Bonnet-Myers theorem, and hence a topological sphere by the Gauss-Bonnet theorem.  

Similarly, given a $C^2$ monotiled surface, although it is obviously possible to deform the prototile and apply the corresponding deformation to all the other copies to remain a $C^2$ monotiled surface with the same topology and face combinatorics, there are nevertheless restrictions on the deformed shape. By the Gauss-Bonnet theorem we have:
\begin{theorem}
If $S$ is a compact $C^2$ monotiled surface with Euler characteristic $\chi(S)$, then the integral of the Gauss curvature
over the prototile is given by:
\[
\int_T K \dd A = \frac{2\pi}{m} \chi(S),
\]
where $m$ is the total number of tiles.
\end{theorem}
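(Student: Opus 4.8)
\section*{Proof proposal (plan)}

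The plan is to derive the formula directly from the global Gauss--Bonnet theorem, using the single extra fact that rigid motions preserve Gauss curvature. First I would record this invariance. By condition \ref{cond3} of Definition \ref{tilingdef} each tile has the form $T_i = g_i(T)$ for a rigid motion $g_i$ of $\real^3$; since $g_i$ is an ambient isometry, it restricts to an isometry, with respect to the first fundamental forms, of the piece of $S$ containing $T$ onto the piece containing $T_i$. By Gauss's \emph{Theorema Egregium} the Gauss curvature is determined by the first fundamental form, and the area measure $\dd A$ likewise; hence $g_i$ carries $K$ and $\dd A$ on $T$ to $K$ and $\dd A$ on $T_i$, so
\[
\int_{T_i} K \, \dd A \;=\; \int_T K \, \dd A, \qquad i = 1, \dots, m.
\]
Note that orientation plays no role here, since $K$ and the area \emph{measure} are orientation independent; in particular the hypothesis of an \emph{oriented} tiling is not needed for this statement.

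Next I would sum over the tiles. Since $S$ is $C^2$ and compact, $K$ is continuous, hence bounded and integrable, and $S$ has finite total area; each tile has positive area (its disc-interior is open in $S$ by invariance of domain), which forces $m < \infty$. By condition \ref{cond2}, distinct tiles meet only in subsets of their boundaries; assuming the mild regularity of $\partial T$ implicit in the $C^k$-tiling setup (for example that $\partial T$ is rectifiable), each $\partial T_i$ is a $2$-dimensionally null set, so the pairwise overlaps are negligible and
\[
\int_S K \, \dd A \;=\; \sum_{i=1}^m \int_{T_i} K \, \dd A \;=\; m \int_T K \, \dd A.
\]

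Finally I would apply the Gauss--Bonnet theorem to the closed $C^2$ surface $S$ --- here ``compact'' must mean compact \emph{without} boundary, since the asserted identity carries no boundary (geodesic curvature) term --- to obtain $\int_S K \, \dd A = 2\pi \chi(S)$. Combining this with the previous display and dividing by $m$ gives $\int_T K \, \dd A = \tfrac{2\pi}{m}\,\chi(S)$, as claimed.

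I do not anticipate a serious obstacle, since the argument is little more than Gauss--Bonnet together with the Theorema Egregium. The one point deserving care is the additivity step: one must know that the union of the tile boundaries is $2$-dimensionally null, so that the overlaps contribute nothing to $\int_S K \, \dd A$. Under the regularity tacitly assumed for tilings of $C^k$ surfaces this is immediate; it would become a genuine issue only if one insisted on allowing prototiles with pathological (positive-area) boundaries.
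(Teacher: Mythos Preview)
Your proposal is correct and follows exactly the approach the paper takes: the paper states the result as an immediate consequence of the Gauss--Bonnet theorem, and your write-up simply fills in the routine details (isometric invariance of $K\,\dd A$ under the rigid motions, additivity over the tiles, and finiteness of $m$).
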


Below we will define an \emph{admissible monotile} to be a tile that can be completely surrounded by copies of
itself, and we will only be interested in the case that the prototile is admissible, as is automatically the case for a tileable surface without boundary. 

There are apparently three distinct cases of interest:
\begin{enumerate}
\item Tileable surfaces with boundary.
\item Compact tileable surfaces without boundary.
\item Complete non-compact tileable surfaces.
\end{enumerate}
In general, an admissible tile generates the first type, just by surrounding the tile with copies of itself, except for
special cases such as a hemisphere, two copies of which fit together to form a surface without boundary.  In manufacturing and architecture surfaces with boundary are the norm, so
tileable  surfaces with boundary are relevant even when the tiling does not extend to a complete surface.

 \subsection{Relation between tileable surfaces and tilings of the plane or sphere}
The many known monotilings of the plane and sphere are examples of monotileable surfaces.
 Deforming the prototile in such a way that the tiles still fit together in a tangent continuous 
 way can easily be performed (Figure \ref{fig2}, third image) to give many examples of monotileable surfaces. However,
 if done in a straightforward way as here, the surface will continue to have the global appearance of a plane or sphere: for the planar case there will still be a single plane that intersects every tile, and likewise a sphere that intersects every tile in the spherical case.
 \begin{figure}[h!tbp]
	\begin{center}
	\includegraphics[height=25mm]{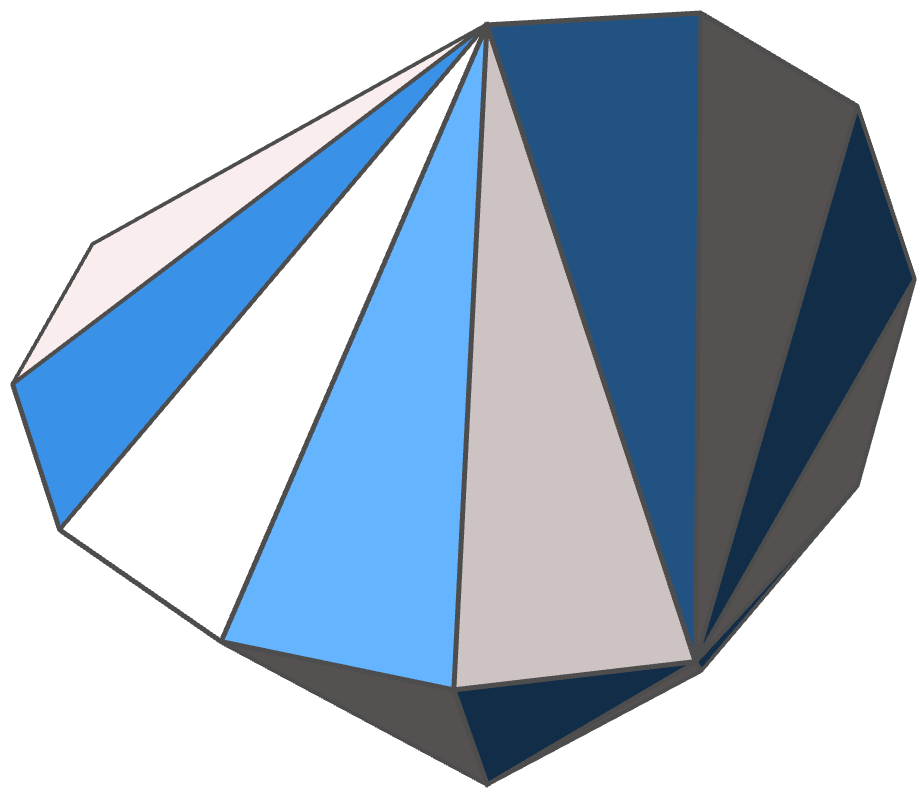}    \quad  
				\includegraphics[height=27mm]{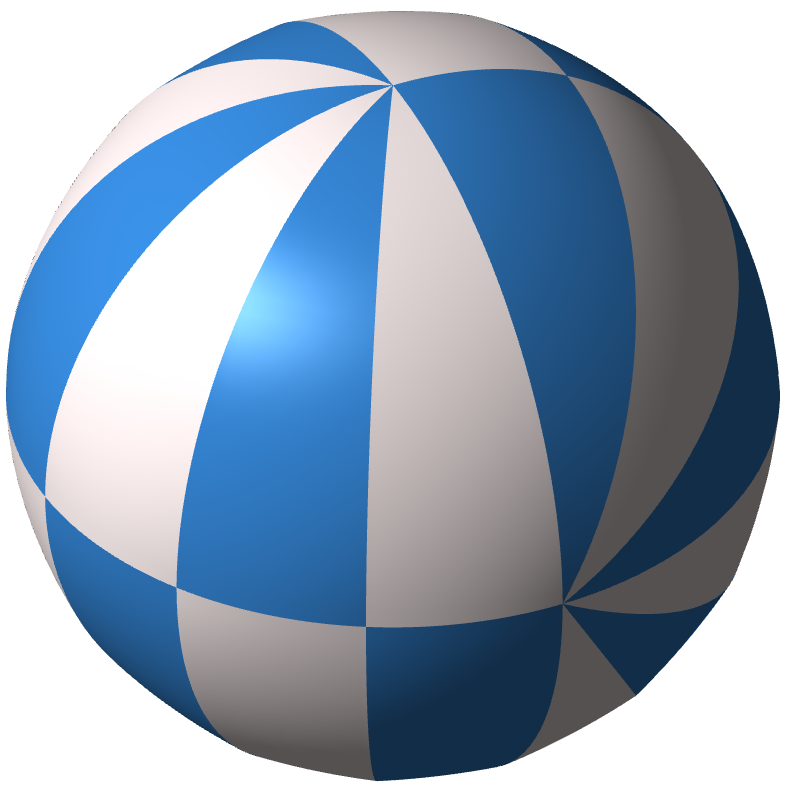}  	 \quad  
  \includegraphics[height=27mm]{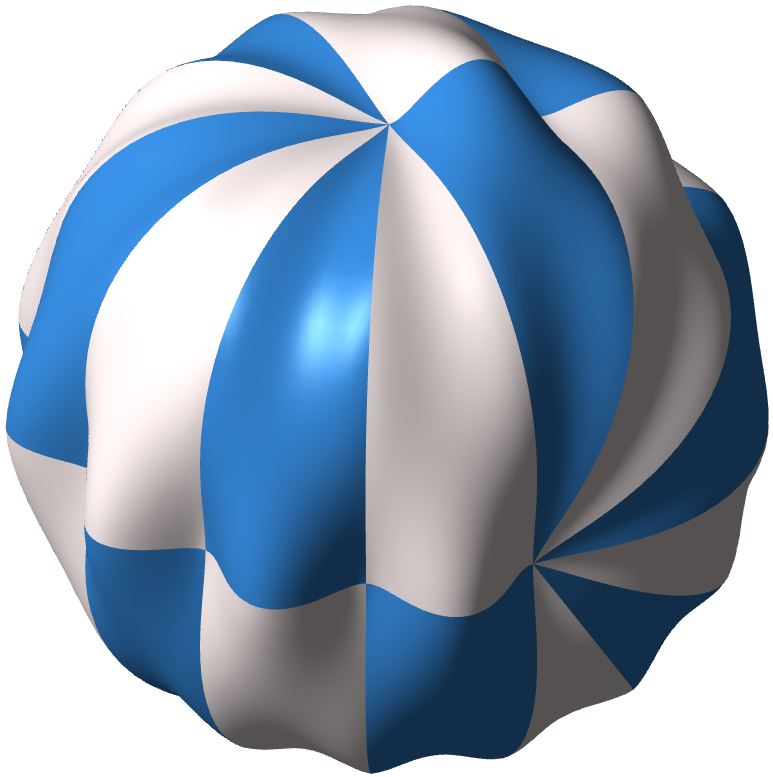}  	\quad
    \includegraphics[height=27mm]{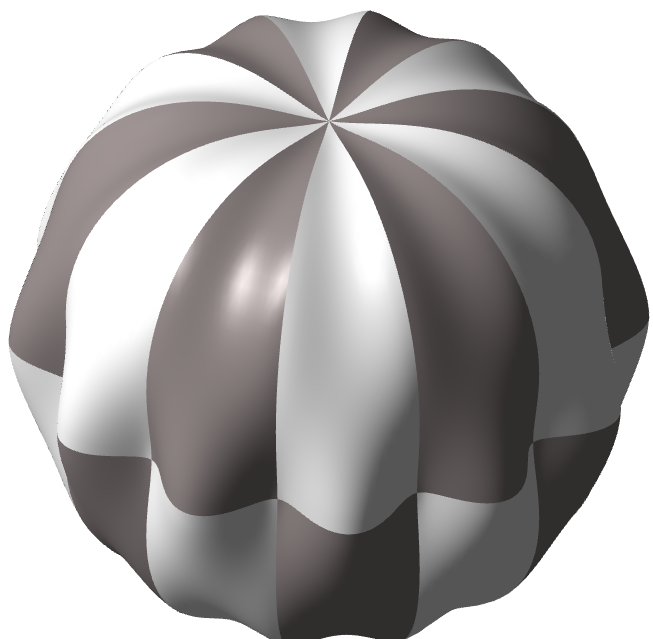}  												
		\end{center}
	\caption{From left to right: monohedral polyhedron (biarc-hull \cite{eppstein2021polyhedral});  projection to the sphere; more general $C^1$ tiling; geometrically distinct tiled surface with the same prototile.}
	\label{fig2}
\end{figure}

\subsection{Relation between tileable surfaces and monohedral polyhedra}
The term \emph{monohedral polyhedron} refers to a polyhedron with congruent faces.  Improper rigid motions are usually allowed.

 \begin{figure}[h!tbp]
	\begin{center}
	\includegraphics[height=30mm]{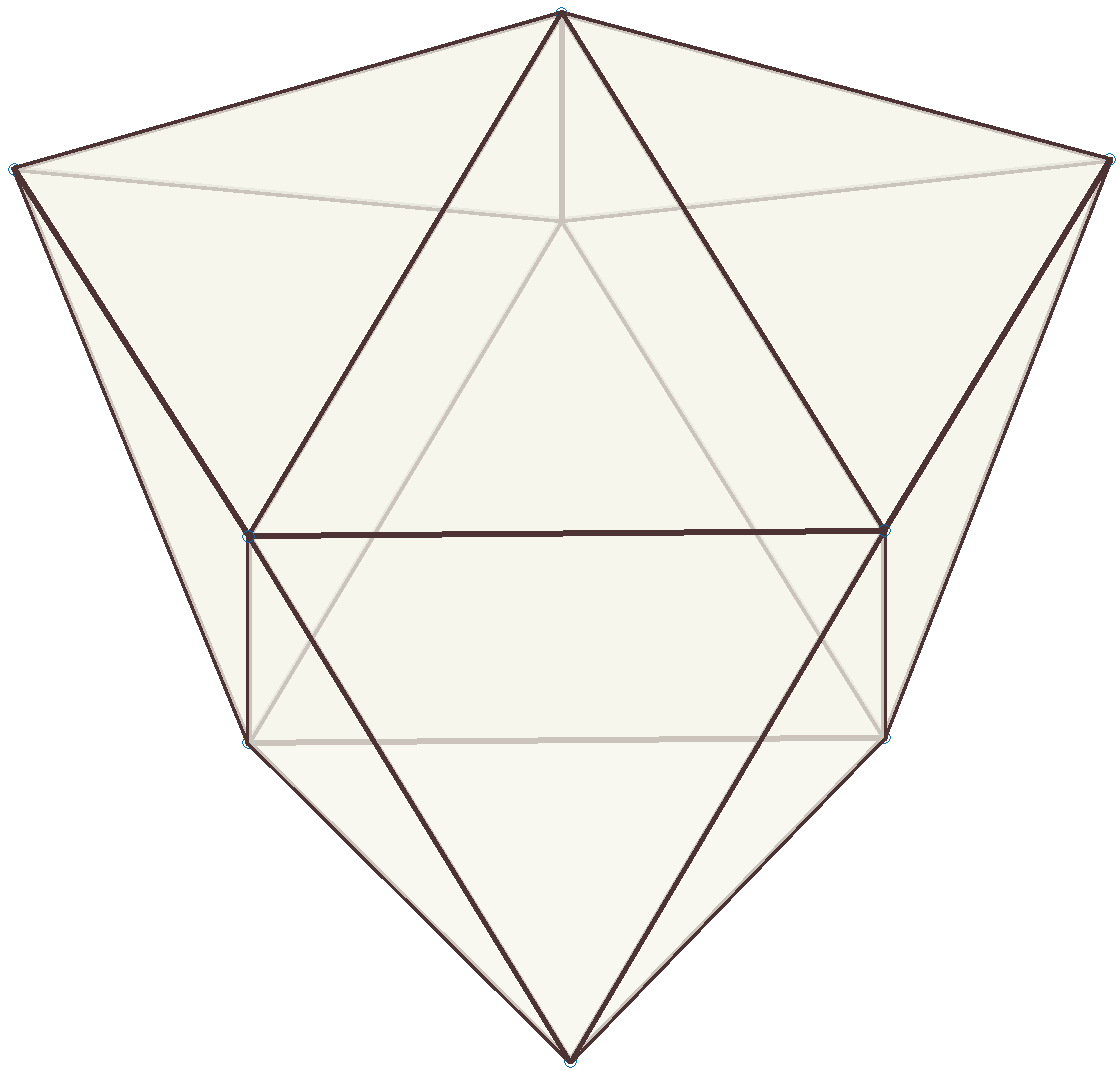}   				
		\end{center}
	\caption{The triaugmented triangular prism 
	 \textbf{cannot} be smoothed to a finite edge oriented monotiled surface with the same graph structure}
	\label{fig3}
\end{figure}

One idea for producing examples of tileable surfaces is to take a monohedral polyhedron and, if possible, deform the model face until the faces fit together tangent continuously along the edges (Figure \ref{fig2}).  {This is obviously possible in some cases.  However, we
will show that there are also examples that \emph{cannot} be smoothed in this way (See Theorem \ref{ttp} below and Figure \ref{fig3}).
Thus, a basic problem is to understand which monohedral polyhedra can be smoothed to become regular tiled surfaces.}

 {Conversely, given a tiled surface,  if we define an ``edge'' on the boundary of a tile to be an uninterrupted interface between two tiles,  and call its endpoints ``vertices'',  then the vertices may correspond to those of a  polyhedral surface. This can be done for the first surface in Figure \ref{fig1}, to form a tiled plane.   For the torus shown in the same figure, the resulting polyhedron collapses because the vertices are
all in the same plane;  more generally, if the number of vertices  on a face is more than three, they will not usually be co-planar, 
and moreover for some tiled surfaces (Figure \ref{fig:nonrigid}) 
there are no canonical points to choose as vertices on the tile.}


\section{Related prior work}  \label{review}
We give here a brief overview of what, for various reasons, appear to be the most relevant previously studied concepts.

\subsection{Doubly and triply periodic surfaces}
These are true tileable surfaces, according to our definition.
Any 2-dimensional or 3-dimensional lattice structure gives us many examples of monotiled surfaces, simply by placing
a surface in the fundamental domain in such a way that translated copies of it are joined tangent continuously.
A deformation of  a periodic plane-tiling (Figure \ref{fig1},  left) is an example of a doubly periodic surface.   More generally, so-called \emph{triply periodic surfaces} in $\real^3$ can easily be constructed by placing
a suitable surface inside  the fundamental domain of a 3-dimensional lattice.
It is possible to arrange that the fundamental piece is also tiled by congruent copies of a topological disc, 
thus the entire surface satisfies our definition of a monotiled surface.  Triply periodic \emph{minimal} surfaces, first discovered by Schwarz and his student in the 19th century, 
are well known (see, e.g. \cite{meeks1990theory}, \cite{karcher1996construction}, \cite{grosse2012triply}) and have  recently become very popular in practical applications such as in additive manufacturing and in tissue engineering:  see, e.g., \cite{rajagopalan2006schwarz}, \cite{feng2022triply} and many other recent works.

Apart from the shape of the fundamental piece, the overall geometry of all of these examples is very simple however, as it is determined by a lattice structure.

\subsection{Tilings of constant curvature surfaces}
This topic is well known, and extensively studied.  See \cite{grunbaum1987tilings}, \cite{schulte1993tilings}, \cite{toth2017handbook} and \cite{adams2023tiling}, for an introduction to tilings,  and \cite{zong2020can} for a survey on monotiling of the plane.
Common themes are classifications of patterns, group actions, tilings with as small a number of distinct tiles as possible,  non-repeating tilings, tilings with different edge colorings etc. However,  we have been unable to find any systematic attempt to extend this study to smooth surfaces of non-constant curvature.

\subsection{Polyhedral surfaces}
{It follows from a theorem of Steinitz that every 3-connected planar graph can be realized as a convex polyhedron.}
However, there are restrictions on the geometry: for instance, not every such graph can be realized as a \emph{monohedral} polyhedron.  Gr\"unbaum \cite{grunbaum2001convex} shows that there exist triangulated spheres that are not realized by any acoptic (meaning homeomorphic to a sphere) monohedral polyhedron.

A monohedral polyhedron is called an \emph{isohedron} if all faces lie within the polyhedron's symmetry orbit, i.e., the rigid motions of the entire polyhedron act transitively on the faces.  Examples include the Platonic solids, the Catalan solids, the bipyramids and the trapezohedra. 
Convex isohedra  are classified, with a list of 30 types.
Even in the convex case,  general monohedral polyhedra do not appear to be classified.
Examples that are not isohedra include the triaugmented triangular prism (Figure \ref{fig3}),  which has equilateral triangular faces,  and the rhombic icosahedron
which has 20 rhombic faces.
 Eppstein \cite{eppstein2021polyhedral} gives an infinite family of monohedral polyhedra monotiled by isosceles triangles.

\subsection{Polyhedral and semidiscrete approximations} \label{sec:approximations}
 In computer aided design and architecture, it is commonly desirable to \emph{approximate} a target (smooth) surface by a polyhedral mesh, or some kind of
semi-discrete structure,  with as few as possible distinct elements, to reduce the cost of manufacturing.

Eigensatz et al \cite{eigensatz2010paneling} describe the ``paneling problem'' where a free form surface is to be approximated by as few as possible distinct panels to minimize the number of molds to be used in the construction.
Liu et al \cite{liu2023reducing} study a similar problem, using polygonal faces.
Singh  and Schaeffer \cite{singh2010triangle} give an algorithm where the input is an arbitrary triangulated surface and the output is an approximation using a small number of polygonal faces. Fu et al \cite{fu2010k} give an algorithm for optimization of a quadmesh approximation of a surface using a small number of distinct (non-planar) quadrilaterals.

Chen et al \cite{chen2023masonry} consider a similar problem for a \emph{shell structure}: approximate the surface by a piecewise linear shell structure, where the goal is to have many identical parts. The difference from polyhedral surfaces is the thickness.  

Bo et al \cite{bo2011circular} (see also \cite{bartovn2013circular}), replace polyhedral surfaces with \emph{circular arc} structures, that is surfaces defined by 2D mesh combinatorics,  where the edges are circular arcs that meet with a common tangent plane.     
Their study shares features of what we discuss below, in that the edges of the faces are tangent continuous. However they do not consider surfaces with congruent faces - instead they aim to construct surfaces with congruent \emph{vertices}.

\subsection{Mosaic methods}
Another approximation concept for smooth surfaces,  mosaic methods  \cite{PW2008}, \cite{PW2009} \cite{chen2017fabricable},\cite{hu2015surface}, 
means approximating a 3D surface by covering as much as possible of it with
flat tiles drawn from a given collection.  Small gaps are allowed between the tiles.  The measure of success for a tile set together with an algorithm is to minimize the gap area while closely approximating the surface.

\subsection{Tiling inspired decoration of arbitrary surfaces}
Any surface has the conformal structure of one of the standard 3 constant curvature
space forms, which allows for texture maps \emph{conformally} equivalent to the many known tilings
of space forms (see, e.g. Gu et al \cite{gu2010fundamentals}). These are not tilings in the sense
discussed here, however, since conformal maps preserve angles but not size.

Jiang et al \cite{jiang2015polyhedral} explore the idea of extending the common tiling patterns on the plane to polyhedral surfaces by another idea.  The surface 
tiles are defined by lifting planar tiles onto paraboloids (approximating the surface around a point by the curvature paraboloid). The method gives a natural way to produce tiling-like patterns on general surfaces, but the tiles are not congruent to each other. The shape of the lift to the surface of a particular model tile will depend on the curvature of the surface at that point.


\pagebreak
 
\section{Rigid tilings} \label{rigidity}
We now investigate some of the properties of tileable surfaces defined in Definition \ref{tilingdef}.
\begin{definition} Two tiles on a tiled surface are \emph{neighbours} if they intersect.   
It follows from Definition \ref{tilingdef} that they must intersect at a finite number of points or closed intervals.
If the intersection contains an interval larger than a single point, the tiles 
are called \emph{adjacent}.
\end{definition}
Suppose that $S$ is a  monotileable surface without boundary.  
 Since the surface has no boundary, every copy of the prototile $T$ (which is homeomorphic to a closed disc) must be surrounded by other copies of the tile.   More generally, for a tileable surface with boundary, a tile $T_j$ that contains no boundary points of the surface will be called an \emph{interior tile}.  Such a tile $T$ will be called rigid if there is only one way  to surround it by copies of itself so that the resulting surface is $C^1$-regular along the boundary of $T$.  More precisely:
 \begin{definition}
 Let $T$ be a continuously embedded closed topological disc in $\real^3$,  $C^k$ regular on the interior of $T$.
 We call $T$ an \emph{admissible tile} if there exists a $C^k$ tileable surface $S$ that contains $T$ as an
 interior tile.  
 An admissible tile  $T$ is \emph{rigid} if:
 \begin{enumerate}
 \item There is a unique set $\{ T_1, T_2, \dots , T_N\}$ that constitutes the set of neighbours of $T$ in \emph{any}
 tiled surface containing $T$ {as an interior tile}.
 \item There is a \emph{unique} rigid motion $\phi_i: T  \to T_i$ for every $i = 1, \dots N$.
\end{enumerate}
\end{definition}
 \begin{figure}[h!tbp]
	\begin{center}
	  \includegraphics[height=20mm]{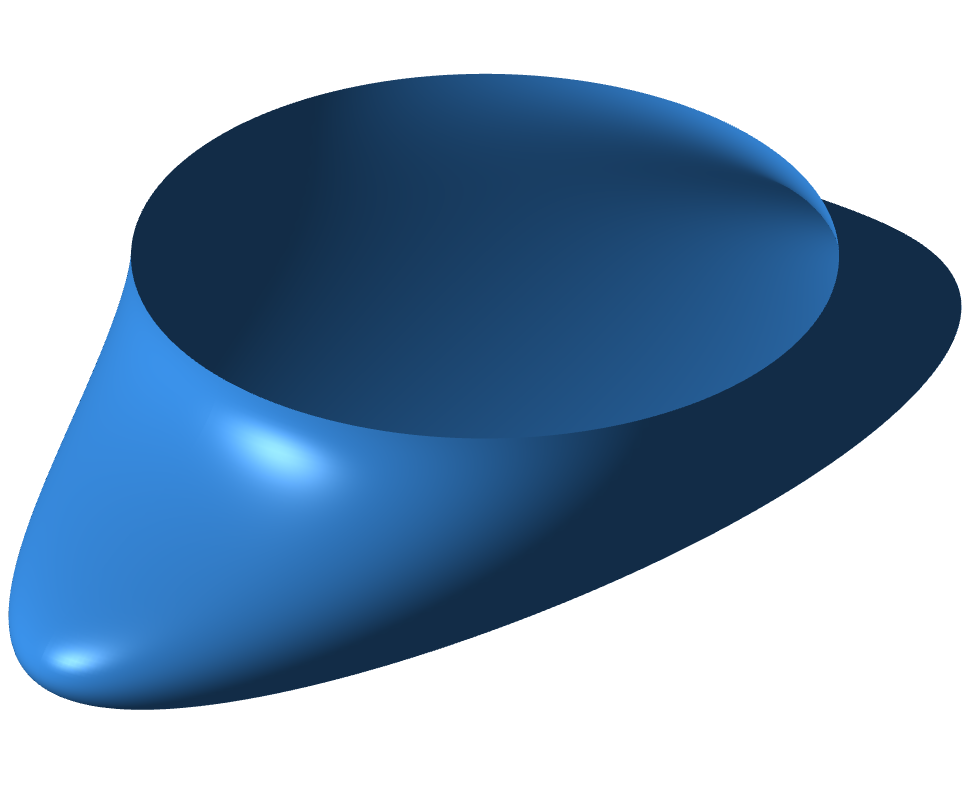}  \quad \quad
  \includegraphics[height=24mm]{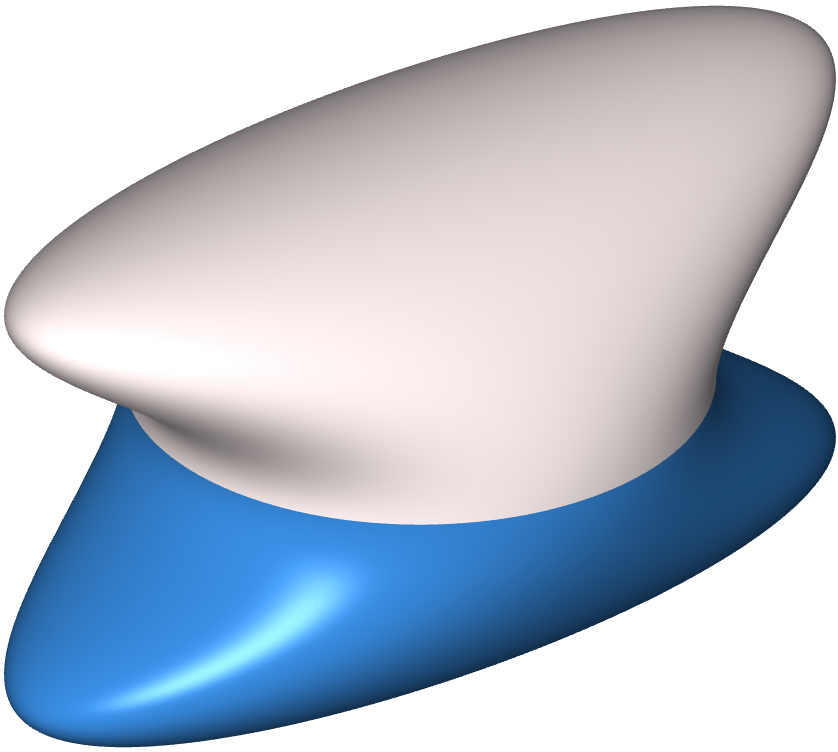}  	\quad \quad 
    \includegraphics[height=24mm]{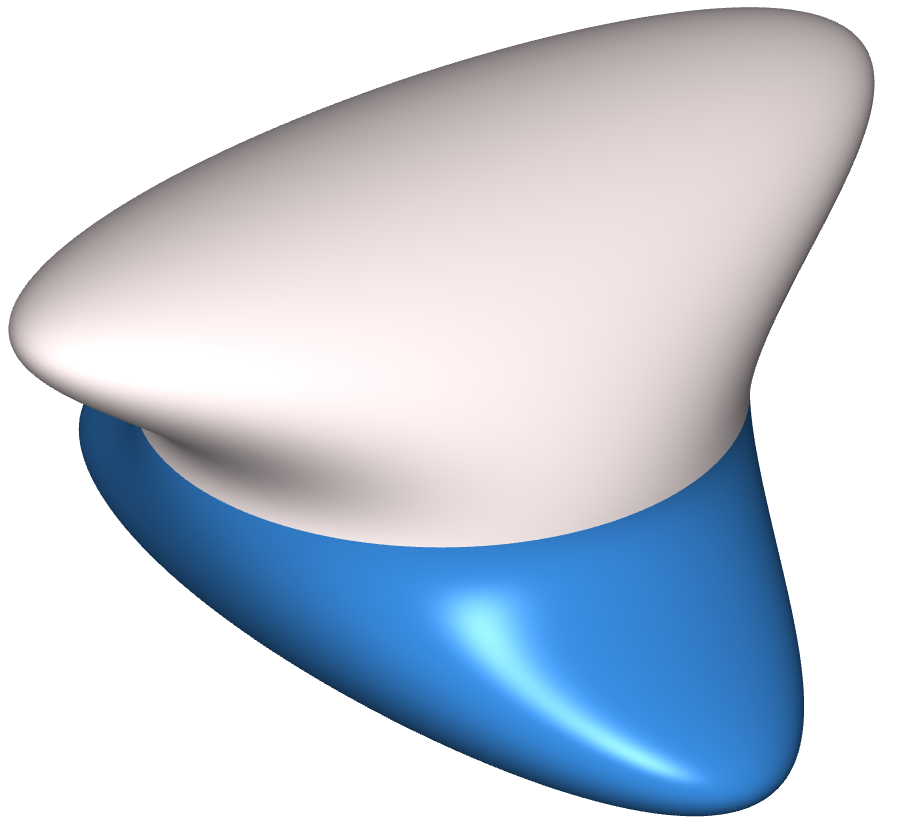}  												
		\end{center}
\caption{A tile that is not rigid and two different surfaces tiled by it.} \label{fig:nonrigid}
\end{figure} 

The tiles in Figure \ref{fig1} are rigid, whilst
Figures \ref{fig2} and \ref{fig:nonrigid} show examples of tiles that are not rigid.  
The tile in Figure \ref{fig2} can be surrounded by itself in only a finite number of ways.  The tile in Figure \ref{fig:nonrigid} has a circle as its boundary, and {surface normal on the boundary parallel to the radius of the circle}. There are uncountably many ways to fit two copies together.   

The next result shows that, for a rigid prototile, there is a transitive group action by isometries on the tiles.
However,  the converse does not hold, as illustrated by the last surface shown in Figure \ref{fig2}, which is face transitive, even though the prototile is not rigid.
 \begin{theorem} \label{rigiditythm}
 Let $S$ be a monotiled surface without boundary such that the prototile $T=T_1$ is rigid.
 Then 
 \begin{enumerate}
\item Every tile in the tiling is also rigid.
 \item \label{item1} If $\psi: T_j \to T_k$ and $\phi: T_j \to T_k$ are two rigid motions that take a tile $T_j$ to
 a tile $T_k$ then $\psi = \phi$.
 \item \label{item2} Every isometry $\psi: T_j \to T_k$ among the tiles is a rigid motion of the entire surface $S$, and
 preserves the tile structure.
 \item The set of such isometries forms a subgroup $G$, of the group $SE(3)$ of rigid motions, that acts transitively
 on the tiles of $S$, and is generated by the rigid motions $\phi_{i_j}: T \to W_{i_j}$, where $W_{i_j}$ are the tiles \emph{adjacent} to $T$.
 \end{enumerate}
 \end{theorem}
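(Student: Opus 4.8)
The plan is to leverage the rigidity of $T$ in stages: first observe that a rigid tile is ``asymmetric'', then transport rigidity from $T$ to every tile, and finally use the resulting uniform local picture to show that a congruence between two tiles has no choice but to extend to a symmetry of all of $S$.

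The preliminary observation is that a rigid tile $T$ has trivial self-congruence group: a rigid motion $\sigma\neq\mathrm{id}$ with $\sigma(T)=T$ would make $\phi_1$ and $\phi_1\circ\sigma$ two distinct rigid motions carrying $T$ onto its neighbour $T_1$, contradicting rigidity. Since condition~(\ref{cond3}) of Definition~\ref{tilingdef} furnishes for each tile $T_j$ \emph{some} rigid motion $g_j\colon T\to T_j$, assertion~(\ref{item1}) follows immediately: if $\psi,\phi\colon T_j\to T_k$ are rigid motions then $g_k^{-1}\psi g_j$ and $g_k^{-1}\phi g_j$ are self-congruences of $T$, hence both the identity, so $\psi=\phi=g_kg_j^{-1}$; in particular each $g_j$ is the unique rigid motion $T\to T_j$. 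For the first assertion I would argue by transport: given any tiled surface $S'$ having $T_j$ as an interior tile, the surface $g_j^{-1}(S')$ is again tiled (ambient rigid motions clearly preserve Definition~\ref{tilingdef}) and has $T=g_j^{-1}(T_j)$ as an interior tile, so its neighbour list at $T$ is the fixed $T_1,\dots,T_N$ with fixed motions $\phi_i$; pushing forward by $g_j$ exhibits the neighbours of $T_j$ in $S'$ as precisely $g_j(T_1),\dots,g_j(T_N)$ with motions $g_j\phi_i$, a list and set of motions independent of $S'$, with uniqueness of each obtained by conjugating an alternative back to $T$. Hence $T_j$ is rigid.

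The hard part will be assertion~(\ref{item2}): extending a single tile-to-tile congruence to all of $S$ is where I expect the real work to lie. Let $\psi$ be a congruence with $\psi(T_j)=T_k$, both tiles of $S$, and call a tile $T'$ of $S$ \emph{good} if $\psi(T')$ is again a tile of $S$. I would show goodness propagates along neighbours: if $T'$ is good and $T''=\psi(T')$, then the congruence $\psi\colon S\to\psi(S)$ transports the tiling of $S$ to that of $\psi(S)$ and hence carries the neighbours of $T'$ in $S$ onto the neighbours of $T''$ in $\psi(S)$; but $T''$ is a tile of $S$, so it is rigid by the first assertion and it is an interior tile of both $\psi(S)$ and $S$, whence its neighbour lists in the two surfaces coincide --- so the $\psi$-images of the neighbours of $T'$ are tiles of $S$, i.e.\ all neighbours of $T'$ are good. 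Since the neighbour graph of the tiling is connected ($S$ is connected), starting from the good tile $T_j$ shows every tile is good, i.e.\ $\psi$ carries tiles of $S$ to tiles of $S$; applying this to $\psi^{-1}$ as well, $\psi$ permutes the tiles bijectively, so $\psi(S)=\bigcup_i\psi(T_i)=S$ and $\psi$ is a rigid motion of $S$ preserving the tiling.

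Finally I would assemble $G$. Let $G\subset SE(3)$ be the set of rigid motions carrying some tile of $S$ onto some tile of $S$; by assertion~(\ref{item2}) each such map is a tiling-preserving symmetry of $S$ that moreover sends \emph{every} tile to a tile, so $G$ is closed under composition and inverses, hence a subgroup. The maps $g_j$ from condition~(\ref{cond3}) lie in $G$ and satisfy $g_j(T_1)=T_j$, so $G$ acts transitively on the tiles; and by~(\ref{item1}) the stabiliser of $T_1$ is trivial, so $g\mapsto g(T_1)$ is a bijection of $G$ onto the tile set. Now let $H\leq G$ be generated by the motions $\phi_{i_j}\colon T\to W_{i_j}$ onto the tiles $W_{i_j}$ adjacent to $T=T_1$. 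Then $H\cdot T_1$ is closed under adjacency: if $T_j=h(T_1)$ with $h\in H$ and $T_k$ is adjacent to $T_j$, then $h^{-1}(T_k)$ is adjacent to $T_1$, so equals some $W_{i_j}=\phi_{i_j}(T_1)$, giving $T_k=(h\phi_{i_j})(T_1)\in H\cdot T_1$. As the adjacency graph of the tiling is connected (a fact that follows from connectedness of $S$ via a short local analysis around each tile vertex), $H\cdot T_1$ is the whole tile set, and the bijection $g\mapsto g(T_1)$ then forces $H=G$. The routine points to be checked throughout are that ``tiled surface'', ``interior tile'', and the neighbour and adjacency relations are all invariant under ambient rigid motions, and that connectedness of $S$ yields connectedness of the neighbour graph (and, for the last claim, of the adjacency graph).
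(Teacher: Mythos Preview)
Your proposal is correct and follows essentially the same approach as the paper: you both observe that rigidity forces trivial self-congruence (hence uniqueness of tile-to-tile motions), transport rigidity to every tile by conjugating with the $g_j$, and then propagate a tile congruence to all of $S$ along chains of neighbours using rigidity at each step plus connectedness. The only differences are cosmetic---you establish assertion~(\ref{item1}) before the ``every tile is rigid'' claim rather than after, and your motions $g_j\phi_i$ should read $g_j\phi_i g_j^{-1}$ if they are to map $T_j$ (rather than $T$) to its neighbours---but the underlying argument is the same.
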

 \begin{proof}
 \begin{enumerate}
 \item Every tile $T_i$ is interior and is equivalent to $T$ via a rigid motion $\psi_i: T \to T_i$.
Applying $\psi_i$ to the neighbours $W_1, \dots W_N$ of $T$, we obtain that $T_i$ is also  rigid, with neighbours, $\psi_i(W_1),\dots \psi_i(W_N)$.
 \item 
 If $\psi: T_j \to T_k$ and $\phi: T_j \to T_k$  are not equal, then $\xi := \psi^{-1} \circ \phi$ is a non-trivial rigid motion that takes $T_j$ to itself. But then the maps $\phi_i: T_i \to W_i$ that map $T_i$ to its neighbours are not unique: 
 they can be replaced by $\phi_i \circ \xi$, contradicting the rigidity of $T_i$.
 \item
   For any tile $T_i$, there is a rigid motion $\psi_{i}: T=T_1 \to T_i$.    The set
   $\psi_i(S)$ is itself a tiled surface isometric to $S$. We need to show that $\psi_i(S)=S$ and that the tiles are permuted by $\psi_i$.  If $W_1, \dots W_N$ are the neighbours of $T$, the rigidity of $T_i$ implies that $\psi_i\left(T \cup \bigcup_{j=1}^N W_j \right)$ is a subset of $S$ \emph{with the
 same tile structure}, since
 both are tileable surfaces that surround $T_i$.
 We can apply the same argument to the neighbours of $W_j$ and beyond, to show that, for 
 any chain of neighbouring tiles $T_{i_1}, \dots T_{i_k}$,  starting from $T$, we have 
 \[
 \psi_i\left(\bigcup_{j=1}^k T_{i_j} \right) \subset S,
 \]
 and that each set $\psi_i(T_{i_j})$ is in fact a tile of $S$. The claim \ref{item2} then follows
 from the connectedness of $S$ {(see Definition \ref{tilingdef})}.  Given any point $x$ in $S$, there is a path from $x$ to some point
 in $T$. This path traverses a finite number of neighbouring tiles, including any tile containing $x$. 
 Hence any tile containing $x$ can be taken to be one of the $T_{i_j}$ above, and 
 is mapped by $\psi_i$ to some other tile of $S$.
 \item We showed that every tile transformation $\psi_{ij} \in SE(3)$ that takes $T_i$ to $T_j$ is also a tile
 transformation among the rest of the tiles.  Thus, if $\psi_{ij}$ and $\psi_{kl}$ are transformations between arbitrary pairs of tiles, we can compose them because $\psi_{ij}$ takes the tile $T_l$ to some tile $T_m$, so
 $\psi_{ij} \circ \psi_{kl}$ is a rigid motion that takes $T_k$ to $T_m$.  It follows that the  tile transformations form a subgroup $G$ of $SE(3)$.
 The fact that $G$ is generated by the isometries $\phi_{i_j}$ to \emph{adjacent} tiles is clear, since any tile $T_k$ can be 
 reached by a chain of adjacent tiles, and the tiling map $T \to T_k$ is unique. 
\end{enumerate}
 \end{proof}

\section{Finite edge type oriented tiles}
From now on we assume that all tilings are oriented monotilings.

In order to analyze how tiles can fit together, we want to decompose the boundary of the prototile into subsets where pairs of adjacent tiles meet, analogous to edges on a polyhedral surface.

For a given tiled surface, the notion of a vertex usually refers to a point that is contained in three or more tiles. 
When considering a tile on its own, unless the tile is rigid, this notion is not well defined.
Hence, we give a more general definition for edges and vertices:
\begin{definition}
Let $T$ be an admissible tile. A \emph{corona} for $T$ is a monotileable surface $S$ made of congruent copies $T_i$ of $T$, such that $T\cap T_i$ is non-empty for all $i$, and such that $T$ is an interior tile of $S$.
An admissible tile is of \emph{finite (edge)} type if there is only a finite number of ways to form a corona for $T$.
  Any closed curve segment along the boundary of $T$, other than a point, that appears as 
a connected component of the intersection of two tiles in some corona, is called an \emph{edge}, and its endpoints are called \emph{vertices}.
\end{definition}
Since there are only a finite number of coronas for a finite type tile (and any corona is formed by 
a finite number of tiles), there are, altogether, only a finite number of edges.   

Note that the property of being of finite edge type depends not only on the boundary of the tile, but also on the surface normal along the boundary. Figure \ref{fig:nonrigid} shows a tile that is \emph{not} finite type.  A planar triangle is also \emph{not} of finite edge type, as there are infinitely many ways to surround it by sliding along the edges.

Give the boundary of $T$ an orientation, and assume that each edge $e_j$ is parameterized 
as a continuous map $e_j: [0,1] \to T$,  in accordance with this orientation.   Every edge of 
an admissible tile must match an edge of an adjacent tile, with the reverse orientation (Figure \ref{config1}, left).
Because each tile is obtained by a rigid motion from any other,
it follows that for edge $e$, the \emph{reverse} curve $\bar e$, obtained by parameterizing $e$ 
with the opposite orientation, must, up to a rigid motion also be included among the edges. Thus,  the set of geometrically distinct edges for an oriented tile must be a list (not necessarily deployed in this order) of the form:
\[
a_1, a_2, \dots a_m,  \quad \tilde a_1, \tilde a_2, \dots \tilde a_m,   \quad \alpha_1, \alpha_2, \dots \alpha_n,
\]
where, if we write $e_1\equiv e_2$ to denote that there is a rigid motion mapping the edge $e_1$ to the 
 edge $e_2$, preserving the surface normal along the edge, we have:
\[
\bar a_j \equiv \tilde a_j, \quad \quad \bar \alpha_k \equiv \alpha_k, \quad \hbox{ for all } j, k.
\]
\textbf{Notation:} We will describe the \emph{(edge) type} of the prototile by listing the edges anticlockwise
in order, using $\bar e$ for an edge that is congruent to the reverse of edge $e$, and using Greek letters for edges that 
are self congruent (Figure \ref{config1}, left and middle).   When tiles are placed together, we will describe a vertex 
type by listing the edges coming out of the vertex, anticlockwise in order, enclosed in parentheses (Figure \ref{config1}, right).
\emph{Note: The surface normals along the edges are also preserved by the congruences relating identically  labelled edges}. 

\begin{figure}[h!tbp]
	\begin{center}
	\includegraphics[height=20mm]{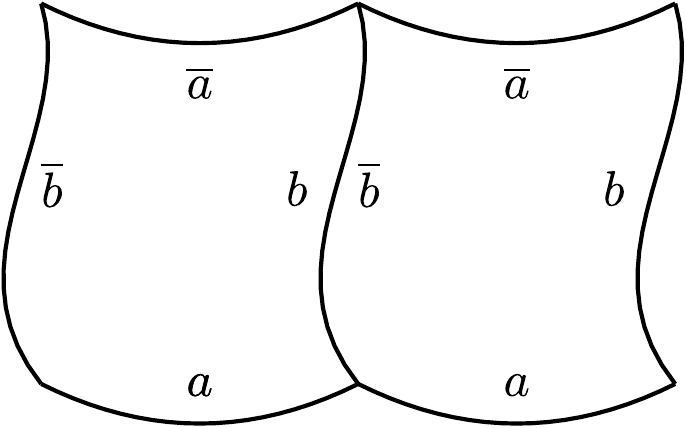}    \quad   \quad  
	\includegraphics[height=20mm]{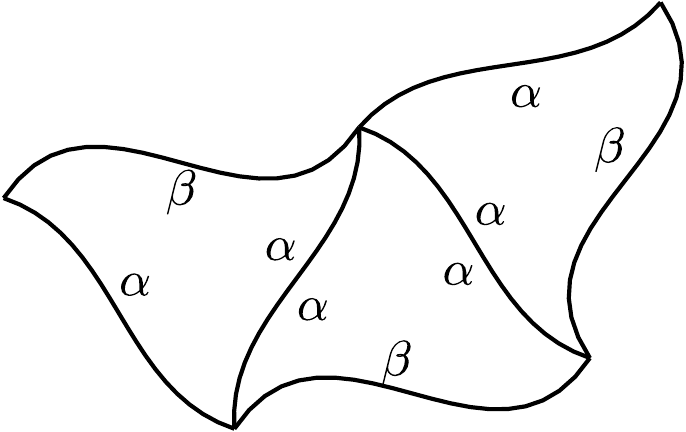}    \quad  \quad  
		\includegraphics[height=20mm]{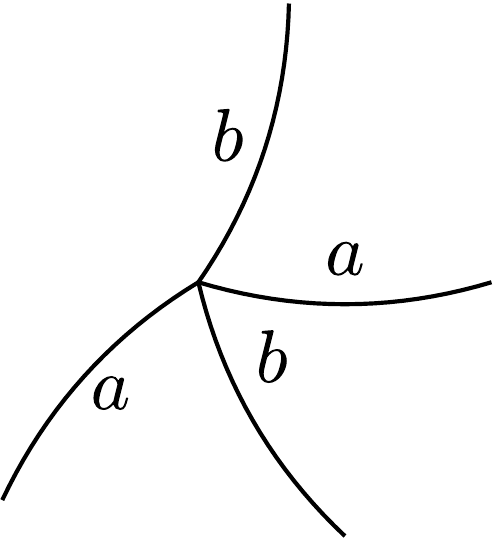} 
		\end{center}
	\caption{Left to right: edge type $ab\bar a\bar b$, edge type $\alpha \alpha \beta$, vertex type $(abab)$.}
	\label{config1}
\end{figure}

Note that a self congruent edge $\alpha$ is symmetric under a unique rigid motion
$\phi$ that interchanges the end points and satisfies $\phi^2=I$.  Necessarily, $\phi$ is a 
rotation of angle $\pi$ about an axis through the midpoint of the line segment joining the endpoints  of $\alpha$.
\begin{remark}
If one wishes to study \emph{non}-oriented tileable surfaces, then another type of edge can be added, which is congruent to another edge after a surface orientation reversing rotation about an axis through the midpoint of the edge.  We exclude this possibility in this article.
\end{remark}

\subsection{Smoothability of monohedral polyhedra} \label{smoothability}
Given a finite edge type monotiled surface $S$, the \emph{graph} of $S$ is the graph naturally defined by the tiling: a vertex of the graph is any point on the surface that corresponds to a vertex on any of the tiles that contain it. An edge of the graph is a set of points on the surface that is an edge of a tile that contains it.  Note that the graph of tiling can have vertices with edge-valence two, unlike the case of standard edge-to-edge tilings, where vertices have valence at least 3.  
\begin{definition}
A monohedral polyhedron $X$ is \emph{smoothable} if there exists a finite edge type monotileable surface with the same graph structure for the faces and edges as those of $X$.  
\end{definition}
If a smoothing $\Sigma$ can be found with identical spatial vertex positions as those of $X$, then we call $\Sigma$ a \emph{vertex preserving} smoothing of $X$. 
Any convex monohedral polyhedron the vertices of which lie on a sphere  can be projected onto the sphere to get 
a vertex preserving smooth monotiling of the sphere, as in Figure \ref{fig2}.  

To find a convex monohedral polyhedron that cannot be smoothed, we will first analyze 
possible vertex configurations of a surface tileable by a 3-edge tile.

\begin{figure}[h!tbp]
	\begin{center}
	\includegraphics[height=22mm]{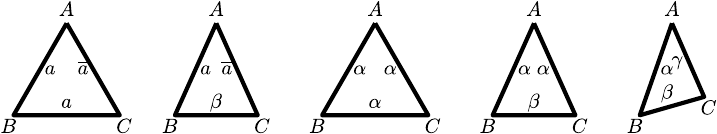}  
		\end{center}
	\caption{Possible edge configurations for a triangular tile.}
	\label{fig:3edges}
\end{figure}

\begin{lemma} \label{vertextypes}
The potential tile types for a $3$-edge oriented prototile are shown in Figure \ref{fig:3edges}.   We roughly
categorize here the possible vertex configurations for oriented tilings with each type:
\begin{enumerate}
\item The case $aa\bar a$ cannot occur as an interior tile
\item Tile type $a \bar a \beta$: two types of vertex are possible one with anticlockwise edge sequence $(a a  \dots)$,  and the other, which must have even degree, with edge sequence $(\beta a \beta a \dots)$. 
\item Tile type $\alpha \alpha \alpha$: all vertices have the same degree.
\item Tile type $\alpha \alpha \beta$.  Vertices are of type
$(\alpha \alpha \dots)$, $(\alpha \beta \dots)$ etc,  where $\beta$ cannot appear twice consecutively.
The tile is in general not rigid, and the same tile can have more than one type of vertex.
\item Tile type $\alpha \beta \gamma$: every vertex has degree a multiple of three, and 
there is only one type of vertex for a given tile.
\end{enumerate}
\end{lemma}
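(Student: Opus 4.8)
The plan is to work case by case through the five edge types of Figure \ref{fig:3edges}, using two structural facts that hold along every edge of a tiled surface: (i) edges must match in pairs of adjacent tiles with \emph{reversed} orientation, so an edge labelled $e$ can only be glued to an edge labelled $\bar e$ (and a self-congruent edge $\alpha$ to another $\alpha$); and (ii) the surface normal is preserved by the congruence relating identically labelled edges, which pins down the rigid motion gluing two neighbours once their shared edge is chosen. The anticlockwise boundary word of each prototile is fixed by the list established just before the lemma, so around any interior vertex the tiles appear in cyclic order and the outgoing edges form a cyclic word obtained by reading, for each incident tile, the two boundary edges emanating from that vertex — with the matching rule forcing consecutive entries across a tile-junction to be an $e,\bar e$ (or $\alpha,\alpha$) pair.

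First I would dispose of $aa\bar a$: here every edge is of the ``handed'' type $a$ or $\bar a$, and the boundary word read anticlockwise is $a\,a\,\bar a$. At a vertex, going around, each tile contributes two \emph{consecutive} boundary edges; scanning the cyclic word $aa\bar a$, the consecutive pairs available are $(a,a)$, $(a,\bar a)$, $(\bar a, a)$. Gluing requires the edge shared by two adjacent tiles at the vertex to be $a$ on one side and $\bar a$ on the other. I would show a parity/counting obstruction: going once around the vertex, each $a\to\bar a$ matching and each $\bar a\to a$ matching must alternate, but the tile contributes either two $a$'s, or an $a$ followed by $\bar a$ reading one way — tracking the ``charge'' $(\#a-\#\bar a)$ contributed and consumed around the cycle yields a contradiction, so $aa\bar a$ cannot close up around an interior point. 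For $a\bar a\beta$ (boundary word $a\,\bar a\,\beta$), the consecutive pairs inside a tile are $(a,\bar a)$, $(\bar a,\beta)$, $(\beta, a)$; a vertex all of whose tiles are glued along $a$–$\bar a$ edges gives the sequence $(aa\dots)$ (two tiles meeting produce $\dots \bar a | a \dots$, i.e.\ reading outgoing edges one sees $a,a$), while alternating through the $\beta$ edges forces $\beta$ and $a$ to alternate, giving $(\beta a\beta a\dots)$ of even degree because a $\beta$-glued junction flips the reading direction; I would check that no third pattern survives the matching constraints. The cases $\alpha\alpha\alpha$, $\alpha\alpha\beta$, and $\alpha\beta\gamma$ are handled by the same mechanism: with all edges self-congruent, every junction glues $\alpha$-to-$\alpha$ etc., and I would observe that at a vertex of type $\alpha\beta\gamma$ the cyclic outgoing word must be a cyclic repetition of a block realizing the tile's boundary pattern up to the $\pi$-rotation symmetry of each edge, forcing degree divisible by $3$ and a unique vertex type; for $\alpha\alpha\alpha$ symmetry of the boundary word under cyclic rotation by one gives all vertices the same degree; for $\alpha\alpha\beta$ the weaker symmetry allows the stated mixed vertex types but forbids $\beta\beta$ consecutively since two $\beta$-edges meeting at a junction would have to be glued to each other, which is allowed, but a vertex reading $(\dots\beta\beta\dots)$ would require the same tile to present $\beta,\beta$ as consecutive boundary edges, contradicting the boundary word $\alpha\alpha\beta$. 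I would also remark, as claimed, that the $\alpha\alpha\beta$ tile is generically non-rigid because the $\pi$-symmetry of the $\alpha$ edges leaves a discrete choice at each $\alpha$-junction.

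The main obstacle I expect is making the ``read the outgoing edges around a vertex as a cyclic word'' bookkeeping fully rigorous — in particular correctly tracking when a gluing junction reverses the local orientation of the boundary traversal (this is what makes the even-degree conclusion in case (2) and the ``no $\beta\beta$'' conclusion in case (4) work), and confirming that the normal-preserving condition (i) above does not secretly permit an extra exotic junction that the purely combinatorial matching would allow. Concretely, I would set up the orientation reversal carefully: if tiles $P$ and $P'$ share edge $e$ with $P$ presenting it as $e$ and $P'$ as $\bar e$, then the anticlockwise orientation of $\partial P$ and $\partial P'$ induce \emph{opposite} directions along the common arc, so reading all the incident tiles' boundary words consistently anticlockwise around the vertex produces the alternation phenomena stated. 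Once that is nailed down, each of the five items reduces to inspecting a three-letter cyclic word and its admissible ``de Bruijn''-style closed walks, which is routine.
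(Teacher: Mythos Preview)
Your combinatorial bookkeeping is adequate for items (1), (2), (4), and (5); indeed the paper itself dismisses these as ``easily checked by the reader.''  The genuine content of the lemma, and the gap in your plan, is item (3), the case $\alpha\alpha\alpha$.

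You claim that ``symmetry of the boundary word under cyclic rotation by one gives all vertices the same degree.''  This does not follow.  The three corners $A$, $B$, $C$ of an $\alpha\alpha\alpha$ tile are a priori geometrically distinct --- the interior of the tile need not have any threefold symmetry --- so the interior angles $\theta_A$, $\theta_B$, $\theta_C$ can in principle differ.  Moreover, because every edge carries the same label $\alpha$, each $\alpha$ can be glued to any other $\alpha$, so the combinatorial matching rule places \emph{no} restriction on which corner of a given tile sits at a given vertex: three distinct placements of the neighbouring tile are available at every $\alpha$--junction.  Your cyclic-word argument therefore cannot distinguish a vertex surrounded by corner-$A$'s from one surrounded by corner-$B$'s, and nothing in your setup forces the degrees at such vertices to coincide.  (Contrast case (5), where the three edge-labels are distinct, so the corner at the vertex is forced to advance cyclically $A\to B\to C$ as you pass from tile to tile; that mechanism is absent here.)

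The paper's argument for (3) is genuinely geometric.  It uses the rigid motions built into the edge symmetries: the rotation $\phi_A$ about the normal line at $A$ taking edge $AB$ to edge $AC$, composed with the $\pi$-rotation $\psi$ about the midpoint axis of edge $AB$, gives a rigid motion $\phi = \phi_A\circ\psi$ sending $A\mapsto C$, $B\mapsto A$ and preserving the surface normals at those points.  Since the underlying triangle $\Delta ABC$ is equilateral (all three edges being congruent), its threefold rotation $R$ also effects $A\mapsto C$, $B\mapsto A$, $\bbn_A\mapsto\bbn_C$, and one argues that $\phi=R$.  Hence $R$ preserves the edges of the tile together with their normals, so the three corner angles coincide, and every vertex has degree $d=2\pi/\theta_A$ independent of how the tiles are placed.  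Your plan needs an argument of this kind in place of the appeal to the symmetry of the boundary word.
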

\begin{proof}
Most of the claims can easily be checked by the reader.   The case $\alpha \alpha \alpha$ is more subtle than the others.
In principle,  the three corners $A$, $B$ and $C$ are geometrically distinct, depending on how the face is filled into the edges.
However,  the interior angles at all three corners are identical:  to see this, consider the six rotations that 
exist defining the symmetries of the $\alpha \alpha \alpha$ tile:  by definition there is a rigid motion $\phi_A$ that takes
 the edge $AB$ to the edge $AC$ preserving the surface normals along these edges.  Since both $A$ and the normal $\bbn_A$ 
 at $A$ are fixed, it is a rotation about  $\bbn_A$.  There is also a rotation $\psi$ of angle $\pi$ about an axis through 
 the midpoint between $A$ and $B$ that interchanges the vertices $A$ and $B$ and fixes the edge $AB$.  
 The composition $\phi = \phi_A \circ \psi$ is a rotation that satisfies $\phi(A) = C$ and $\phi(B) = A$, and also preserves the surface normals at those points.   Now consider the underlying \emph{planar} triangle $\Delta ABC$. This is an equilateral triangle, so there is a rotation $R$ about an axis through its midpoint, perpendicular to its plane, that takes the edge $AB$ to the edge $AC$.  It also preserves the  normals at the vertices, because all of these normals  are symmetric with respect to planes bisecting the vertices pairwise, and hence symmetric with respect to 
 the center of the triangle.  Since  $\phi$ and $R$ are both rotations that take $A$ to $C$, $B$ to $A$ and $\bbn_A$ to $\bbn_C$, and the normal $\bbn_A$ does not lie on the line segment $\underline{AC}$, it follows that $\phi=R$.
 The same argument applies to any of the corners, so the rotation $R$ about the center of $\Delta ABC$ preserves all
 of the edges of the tile, and hence the angles at the corners.  From this it follows that the interior angle at each corner must be $2\pi/d$, where $d$ is the degree at a vertex of the tiling, and the degree is fixed by the geometry of the tile.
\end{proof}

We can use this information to investigate the smoothability of a monohedral polyhedron with triangular faces.
The triaugmented triangular prism (Figure \ref{fig3}) is a convex polyhedron with $14$ equilateral triangles as its faces, constructed by attaching an equilateral square based pyramid to each of the three square faces of a triangular prism.  It has 
vertex configuration
$3 \times 3^4 + 6 \times 3^5$, meaning $3$ vertices of degree $4$, and $6$ of degree $5$.

We first look at the edge type $\alpha \alpha \beta$:
\begin{lemma}  \label{aablemma}
Let $\Sigma$ be a compact finite edge type tiled surface with prototile of boundary configuration $\alpha \alpha \beta$ (Figure \ref{fig:3edges}).  Suppose that $\Sigma$ has vertices of  both degree $4$ and degree $5$.
Denote the
interior angles at the corners $A$,$B$ and $C$ of $T$ by  $\tA$, $\tB$ and $\tC$.  Then the interior angles
must satisfy one of the following pairs of equations:
\beqas
 \hbox{Case 1: } &\quad & 
    \left\{ \begin{array}{rll} 5\tA &=2\pi,  \quad \quad&  (\alpha \alpha \alpha \alpha \alpha) \\
            2\tB+2\tC &=2\pi ,   \quad \quad  &(\alpha \beta \alpha \beta)  \end{array}\right.    \\
   \hbox{Case 2: }  &\quad &
        \left\{ \begin{array}{rll} 5\tA & =2\pi, \quad \quad  &  (\alpha \alpha \alpha \alpha \alpha) \\
                   \tB+\tC+2\tA&=2\pi,  &  (\alpha \beta \alpha \alpha)   \end{array}\right.    \\
 \hbox{Case 3: } &\quad
& \left\{ \begin{array}{rll} 2\tB+2\tC+\tA &=2\pi, \quad \quad  & (\alpha \beta \alpha \beta \alpha)  \\
   2\tA +\tB+\tC &=2\pi, \quad \quad   &  (\alpha \beta \alpha \alpha)  \end{array}\right.\\
 \hbox{Case 4: }   & \quad
& \left\{ \begin{array}{rll}  2\tB+2\tC+\tA &=2\pi,  \quad \quad  & (\alpha \beta \alpha \beta \alpha)\\ 
          4\tA&=2\pi , \quad \quad   &  (\alpha \beta \alpha \alpha)  \end{array}\right. \\
 \hbox{Case 5: }& \quad
& \left\{ \begin{array}{rll} \tB+\tC+3\tA &=2\pi,   \quad \quad  & (\alpha \beta \alpha \alpha \alpha) \\
          2\tB+2\tC &=2\pi, \quad \quad   &  (\alpha \beta \alpha \beta) \hbox{ or } (\alpha \alpha \alpha \alpha) \end{array}\right. \\
\eeqas

Cases $1$ to $4$ have only one type of vertex configuration of degree $5$ and one of degree $4$. Case $5$ has only
the vertex configuration $(\beta \alpha \beta \alpha \alpha)$ of degree $5$ but may have either or both 
vertex configurations
$(\alpha \beta \alpha \beta)$ and $(\alpha \alpha \alpha \alpha)$ of degree $4$
\end{lemma}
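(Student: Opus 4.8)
The plan is to reduce the whole statement to elementary bookkeeping of the tile-corner angles around a vertex, the only analytic input being the $C^1$-regularity of $\Sigma$, which forces the interior angles of the tile-corners meeting at any vertex to sum to $2\pi$.

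\emph{Local equations at a vertex.} Write $A$ for the corner of the prototile between its two $\alpha$-edges, with interior angle $\tA$, and $B,C$ for the remaining two corners (the one where the $\beta$-edge leaves, respectively enters, the boundary), with angles $\tB,\tC$. To see which corners can occur consecutively around a vertex $v$, orient each tile boundary; since $\bar\alpha\equiv\alpha$ and $\bar\beta\equiv\beta$, two tiles adjacent along an edge run along it in opposite directions, and the label ($\alpha$ or $\beta$) of the common edge is the same for both. Recording, for each of $A,B,C$, the labels of the edge entering and the edge leaving the corner with respect to the boundary orientation, one checks that, going anticlockwise around $v$, a $C$-corner must be followed by a $B$-corner and a $B$-corner must be preceded by a $C$-corner; hence the $B$- and $C$-corners at $v$ group into consecutive blocks $CB$, $\beta$ never occurs twice consecutively at a vertex (recovering the observation in Lemma~\ref{vertextypes}), and the cyclic corner-word at a vertex of degree $d$ is a cyclic word in the two symbols $A$ and $CB$, with, say, $p$ copies of $A$ and $q$ of $CB$. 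Thus $p+2q=d$ and the angle condition at the vertex reads
\[
p\,\tA + q(\tB+\tC)=2\pi,\qquad p+2q=d .
\]

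\emph{Enumeration for $d=4,5$.} For $d=4$ the admissible pairs are $(p,q)=(4,0),(2,1),(0,2)$, giving the equations $4\tA=2\pi$, $2\tA+\tB+\tC=2\pi$, $2\tB+2\tC=2\pi$, realised (for these small $p,q$ the cyclic arrangement of the blocks is unique) by the configurations $(\alpha\alpha\alpha\alpha)$, $(\alpha\beta\alpha\alpha)$, $(\alpha\beta\alpha\beta)$ respectively. For $d=5$ the admissible pairs are $(5,0),(3,1),(1,2)$, giving $5\tA=2\pi$, $3\tA+\tB+\tC=2\pi$, $\tA+2\tB+2\tC=2\pi$, realised by $(\alpha\alpha\alpha\alpha\alpha)$, $(\alpha\beta\alpha\alpha\alpha)$, $(\alpha\beta\alpha\beta\alpha)$. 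Because $\Sigma$ has a vertex of degree $4$ and one of degree $5$, one equation from each list holds, leaving $3\times3=9$ combinations.

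\emph{Sorting the combinations.} Three of the nine are algebraically impossible: $\{4\tA=2\pi,\,5\tA=2\pi\}$ would force $\tA$ to equal both $\pi/2$ and $2\pi/5$, while $\{2\tA+\tB+\tC=2\pi,\,3\tA+\tB+\tC=2\pi\}$ and $\{2\tB+2\tC=2\pi,\,\tA+2\tB+2\tC=2\pi\}$ each force $\tA=0$. The remaining six combinations are exactly the five cases of the statement, Case~5 comprising the two whose common degree-$5$ equation is $3\tA+\tB+\tC=2\pi$. In each combination, solving the two linear equations pins down $\tA$ and $\tB+\tC$, and substituting those values back into the three degree-$4$ and the three degree-$5$ equations isolates, in Cases~1--4, a single compatible vertex configuration of each degree, and in Case~5 the unique degree-$5$ configuration $(\alpha\beta\alpha\alpha\alpha)$ together with the degree-$4$ configuration $(\alpha\beta\alpha\beta)$ when $\tA=\pi/3$ or $(\alpha\alpha\alpha\alpha)$ when $\tA=\pi/2$; this yields the final assertions on vertex types.

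\emph{Main obstacle.} Everything after the first step is finite linear elimination, so the crux is the combinatorial transition analysis in Step~1: one must keep the boundary orientations and the identifications $\bar\alpha\equiv\alpha$, $\bar\beta\equiv\beta$ straight in order to prove rigorously that the cyclic corner-word at a vertex is precisely a word in the blocks $\{A,\ CB\}$ — equivalently, that the $B$- and $C$-corners pair off and no two $\beta$-edges are consecutive. Once that structural statement is established, the list of five cases (and the vertex types within each) is forced.
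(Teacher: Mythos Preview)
Your proof is correct and follows essentially the same approach as the paper: enumerate the admissible degree-$4$ and degree-$5$ vertex configurations from the constraint that two $\beta$-edges are never consecutive, convert each to an angle equation via the $C^1$ condition, and discard the algebraically inconsistent pairings. Your treatment is slightly more systematic---you make explicit the $CB$-block structure at a vertex, whereas the paper simply asserts that ``a $\beta$ edge can only be followed by an $\alpha$ edge''---but the underlying argument is the same.
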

\begin{proof}
  By matching corresponding edges of the tile, since a $\beta$ edge can only be followed by an $\alpha$ edge,  a vertex of degree $5$ can only be 
of the form 
\[
(\alpha \alpha \alpha \alpha \alpha), \quad (\beta \alpha \alpha \alpha \alpha), \quad \hbox{or }
(\beta \alpha \beta \alpha \alpha),
\] 
and
a vertex of degree $4$ must be of form 
\[
(\alpha \alpha \alpha \alpha), \quad (\beta \alpha \beta \alpha), \quad 
\hbox{or } (\beta \alpha \alpha \alpha).
\]
For the surface to be regular, the tangent plane must be well defined at a vertex, so the sum of the
angles around a vertex must be $2\pi$.  The above listed vertex configurations thus give, for a vertex of  degree $5$:
\beq \label{vertexsum1}
5\tA = 2\pi, \quad \hbox{or} \quad 2\tB+2\tC+\tA = 2\pi, \quad \hbox{or} \quad
\tB+\tC+3\tA=2\pi,
\eeq 
and for a vertex of degree $4$:
\beq \label{vertexsum2}
 2\tB+2\tC=2\pi, \quad \hbox{or} \quad  \tB+\tC+2\tA=2\pi, \quad \hbox{or} \quad  4\tA =2\pi.
\eeq
At least one of the equations from each of \eqref{vertexsum1} and \eqref{vertexsum2} must be 
satisfied.   Enumerating the possibilities that do not result in inconsistencies gives the five cases in 
the statement of the lemma.
\end{proof}

\begin{theorem} \label{ttp}
The triaugmented triangular prism is not smoothable.  In fact
there is no compact finite edge orientably tileable surface without boundary with vertex configuration  $3 \times 3^4 + 6 \times 3^5$.
\end{theorem}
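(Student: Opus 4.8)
The plan is to assume a surface $S$ as in the statement exists and to reach a contradiction by treating each possible edge type of its prototile. Since every vertex of $S$ has type $3^4$ or $3^5$, every face of $S$ is a triangle, so the prototile $T$ has three edges and, by Lemma~\ref{vertextypes}, is of one of the five types displayed in Figure~\ref{fig:3edges}. First I would record the combinatorial data forced by the vertex configuration: $S$ has $m=14$ tiles and $V=9$ vertices, three of degree $4$ and six of degree $5$. Moreover, because exactly two tiles meet along each edge and an edge has the same congruence class seen from either side (a self-congruent edge $\alpha$ is glued to an $\alpha$, while an $a$-edge is glued to a $\bar a$-edge), the number of edges of the tiling in a given class equals $m$ times the multiplicity of that class in $\partial T$, divided by $2$.

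Three of the five types are ruled out immediately. If $T$ is of type $aa\bar a$ it cannot be an interior tile by Lemma~\ref{vertextypes}(1), yet every tile of a closed surface is interior. If $T$ is of type $\alpha\alpha\alpha$ then by Lemma~\ref{vertextypes}(3) all vertices of $S$ share one common degree, contradicting the coexistence of degrees $4$ and $5$. If $T$ is of type $\alpha\beta\gamma$ then by Lemma~\ref{vertextypes}(5) every vertex degree is a multiple of $3$, which is again impossible. For type $a\bar a\beta$ I would count $\beta$-edges: by Lemma~\ref{vertextypes}(2) each vertex of $S$ is either surrounded entirely by $a$-edges, or of the strictly alternating type $(\beta a\beta a\cdots)$ of even degree; thus a degree-$5$ vertex carries no $\beta$-edge and a degree-$4$ vertex at most two. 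Since $\partial T$ contains a single $\beta$-edge, the tiling has $14/2=7$ edges of class $\beta$, hence $14$ incidences of a $\beta$-edge with a vertex; but the six degree-$5$ vertices supply $0$ of these and the three degree-$4$ vertices at most $6$, so $14\le 6$, a contradiction.

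The remaining and main case is $T$ of type $\alpha\alpha\beta$. Here Lemma~\ref{aablemma} applies, since $S$ has vertices of both degrees $4$ and $5$, and it restricts $(\tA,\tB,\tC)$, together with the admissible degree-$4$ and degree-$5$ vertex words, to one of five cases. Counting $\beta$-edge–vertex incidences exactly as before — there are again $7$ edges of class $\beta$, hence $14$ such incidences — and reading off the number of $\beta$'s in the admissible vertex words of each case, Cases~$1$ through $4$ force totals of $6$, $3$, $15$ and $15$ respectively, none equal to $14$, and are thereby excluded. In Case~$5$ the count is consistent only if both admissible degree-$4$ vertex words $(\alpha\beta\alpha\beta)$ and $(\alpha\alpha\alpha\alpha)$ actually occur in $S$; but the word $(\alpha\beta\alpha\beta)$ imposes $2\tB+2\tC=2\pi$ and the word $(\alpha\alpha\alpha\alpha)$ imposes $4\tA=2\pi$, and these are incompatible with the degree-$5$ angle relation of Case~$5$. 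This exhausts the five types, so no such $S$ exists; since a smoothing of the triaugmented triangular prism would be precisely such an $S$, the prism is not smoothable.

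I expect Case~$5$ of the $\alpha\alpha\beta$ analysis to be the genuine obstacle: there neither the $\beta$-edge count nor the vertex-angle relations suffice on their own, and one must first use the count to force the presence of both degree-$4$ vertex types and only afterwards extract the angular contradiction. The step requiring the most care is the bookkeeping that matches each vertex word to its angle-sum relation — in particular recognising that a vertex of word $(\alpha\alpha\alpha\alpha)$ contributes the relation $4\tA=2\pi$ — after which the remaining verification in each case is routine.
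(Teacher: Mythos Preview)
Your argument is correct and follows essentially the same route as the paper: reduce to the five tile types via Lemma~\ref{vertextypes}, dispose of the first three immediately, and handle $\alpha\alpha\beta$ through the case list of Lemma~\ref{aablemma} together with a counting argument. The only difference is the invariant used in the count: the paper checks that each corner $A$, $B$, $C$ must appear exactly $14$ times, whereas you count $\beta$-edge--vertex incidences (equivalently, the total number of $B$- and $C$-corners). These invariants are interchangeable, and your version is arguably a little cleaner for the $a\bar a\beta$ type.

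One comment on Case~5: contrary to your closing remarks, it is \emph{not} the genuine obstacle, and no angle argument is needed there. The six degree-$5$ vertices $(\alpha\beta\alpha\alpha\alpha)$ contribute exactly $6$ $\beta$-incidences, and the three degree-$4$ vertices contribute at most $2$ each, regardless of the mix of $(\alpha\beta\alpha\beta)$ and $(\alpha\alpha\alpha\alpha)$; hence the total is at most $12<14$, and Case~5 falls to the same $\beta$-count as the others. Your detour through the simultaneous relations $2\tB+2\tC=2\pi$, $4\tA=2\pi$, $\tB+\tC+3\tA=2\pi$ is valid but superfluous. (Incidentally, your Case~4 total should read $12$ rather than $15$: the angle relation $4\tA=2\pi$ there corresponds to the word $(\alpha\alpha\alpha\alpha)$, with no $\beta$-edges; the conclusion is of course unaffected.)
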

\begin{proof}
Consider the possible vertex configurations described in Lemma \ref{vertextypes}.
Tile type 1 is not interior.  
Type  $\beta \bar a a$ can only have two types of vertex: $(a a  \dots)$ of arbitrary degree, and $(\beta a \dots)$ of even degree.  Therefore, a vertex of degree $5$ must be of type $(a a a a a)$, and there must be six of these.  This means that the corner $A$ (adjacent to $a$ and $\bar a$) appears $30$ times on the surface, which is impossible as the surface has $14$ tiles only. 
Tile type $\alpha \alpha \alpha$ and $\alpha \beta \gamma$ have vertices all of the same degree, so cannot have vertices of both
degree $4$ and $5$.

Finally, the case $\alpha \alpha \beta$ has the options listed in Lemma \ref{aablemma}.
The surface has 14 geometrically identical faces, so each of the corners, $A$, $B$ and $C$ must appear $14$ times exactly altogether,  and there are $6$ vertices of degree $5$, and $3$ vertices of degree $4$.
Going through the cases:
In Case 1 and Case 2  we have  $6$ vertices of configuration $(\alpha \alpha \alpha \alpha \alpha)$ so the corner $A$ appears $6\times 5 = 30$ times altogether, an impossibility.
In Case 3 we have 6 vertices of type $(\alpha \beta \alpha \beta \alpha)$ and 3 of type $(\alpha \beta \alpha \beta)$,
so the corner $A$ appears $6$ times, while the corners $B$ and $C$ appear $16$ times each.
In Case 4 we have $12$ instances of $B$ and $C$ and $18$ instances of $A$.
In Case 5 we have 6 vertices of configuration $(\alpha \beta \alpha \alpha \alpha)$ and the other 3 vertices
can be either of type $(\alpha \beta \alpha \beta)$ or $(\alpha \alpha \alpha \alpha)$.  And this gives at least $18$
vertices of type $A$.   
\end{proof}


\section{Monotileable surfaces with at most three edges}
\subsection{Prototiles with $1$ or $2$ edges}
A prototile with only one edge can be constructed by deforming the boundary of a hemisphere so that it fits together in only one way (Figure \ref{fig:twoedge}).   Any other tile with only one edge also has the same property that it fits together to form a topological sphere.
Note that a hemisphere, or any topological disc with boundary a circle and normal perpendicular to the plane of the curve along the boundary, is not a finite edge tile, because there are uncountably many ways that it can surround itself.

\begin{figure}[h!tbp]
	\begin{center}
		\includegraphics[height=20mm]{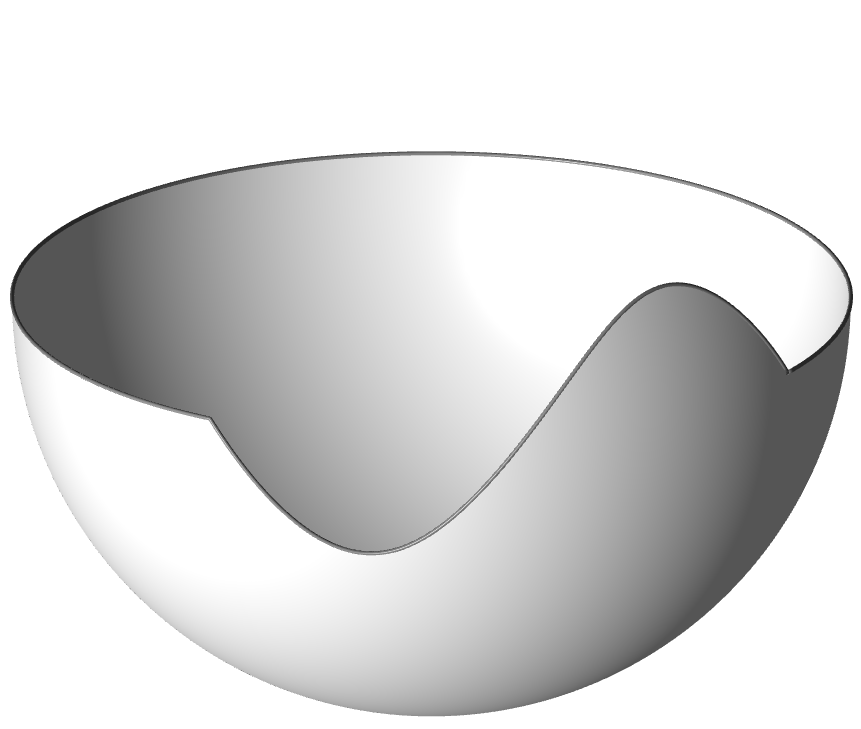}  \,
		\includegraphics[height=22mm]{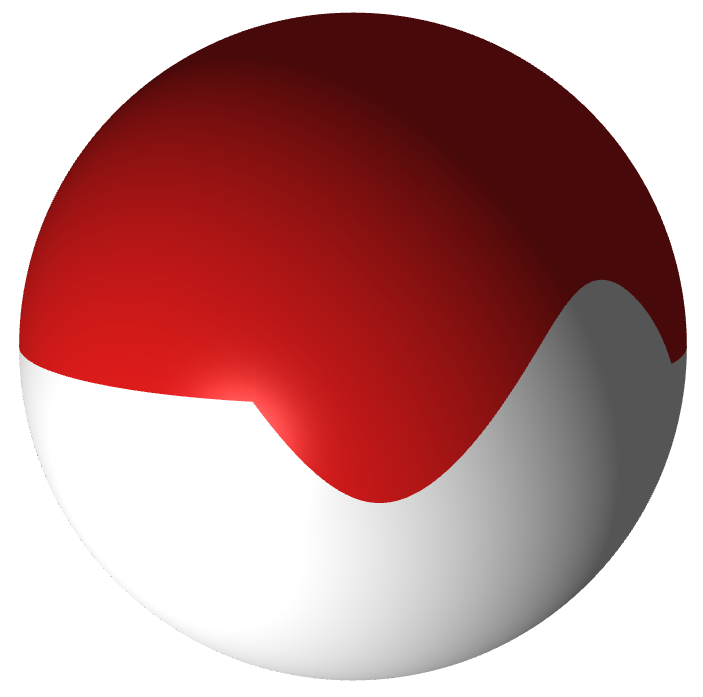}  \quad \quad \quad\quad
		\includegraphics[height=22mm]{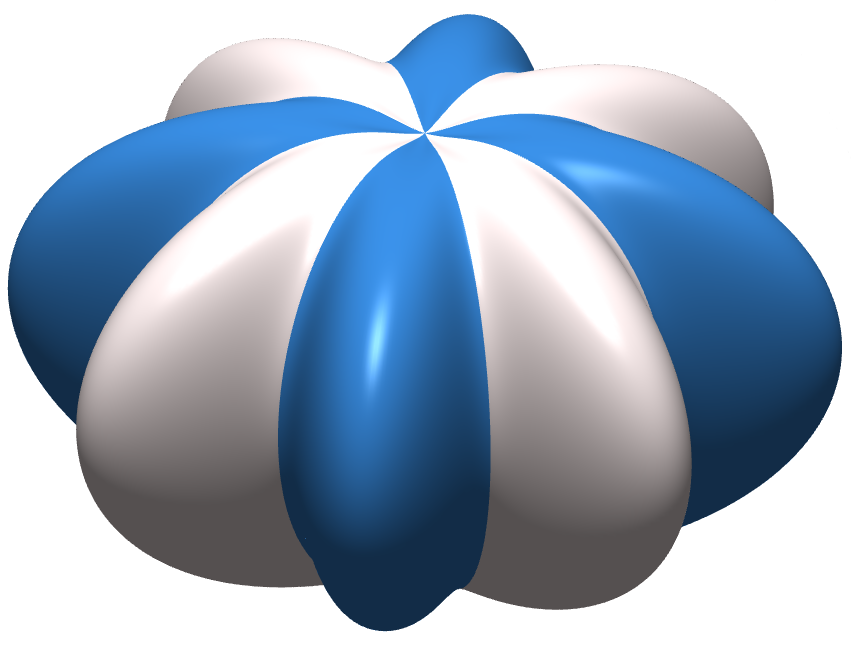} 
		\end{center}
	\caption{Left: 1 edge tile.  Right: 2 edges. }
	\label{fig:twoedge}
\end{figure}
The only configurations possible for a tile with two edges are $a \bar a$ and $\alpha \alpha$.
For $a \bar a$, if we denote the two vertices by $p$ and $q$,  and noting that $\bar a$, which runs
from $p$ to $q$ is reverse congruent to $a$, which runs from $q$ to $p$, it follows that the congruence is given 
by a rotation about the axis through $p$ and $q$.  Since the interior of the tile is by definition non-empty, this rotation is by some nonzero angle $\theta$.  There is only one way for the tiling to continue, and that is to repeatedly
rotate the prototile by the same angle $\theta$ about this axis.  If we require the surface to be embedded,
the angle must be of the form $\theta = 2\pi/m$, where $m$ is an integer, and the surface is a tiling of a topological
sphere by $m$ copies of this tile. We also have a restriction that the tangent to the edge curve $a$ at the end points must be perpendicular to the axis, and that the tangent to the tile at the vertices is the plane perpendicular to the axis.  The set of such surfaces (if embedded) is the set of compact surfaces of genus zero that are invariant under a  rotation of finite order (Figure \ref{fig:twoedge}). For $\alpha  \alpha$, one also has the possibility to turn the tile around, reversing $p$ and $q$, which gives various shapes depending on the interior of the tile, but the result is still a topological sphere.

\subsection{Prototiles with 3 edges} \label{sec:3edge}
As mentioned above, there are four different types of interior tile with three edges, depicted in
Figure \ref{fig:triangle4cases}.   The symmetries of the tiles and the maps that take a tile to an adjacent 
tile can be used to study these tilings.

The basis of our analysis is the following, which applies to the tiles of Type 1, 2 and 3 in 
Figure  \ref{fig:triangle4cases}:
\begin{lemma}
  \label{thm:axis_in_bisector}
  Suppose we are given two points $B\neq C$ in 3-space and two axes of
  rotation $\ell_1$ and $\ell_2$ with corresponding rotations $R_1$
  and $R_2$ such that $R_1(\phi)B=R_2(\pi)B=C$. Then both $\ell_1$
  and $\ell_2$ lie in the bisector of the points $B$ and $C$, and
  $\ell_2$ goes through the point $(B+C)/2$.
\end{lemma}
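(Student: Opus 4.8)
The plan is to read ``the bisector of the points $B$ and $C$'' as the perpendicular bisector plane $H$ of the segment $\underline{BC}$, i.e.\ the plane through the midpoint $M:=(B+C)/2$ orthogonal to $B-C$, which is exactly the set of points of $\RR^3$ equidistant from $B$ and $C$. The first claim — that $\ell_1$ and $\ell_2$ both lie in $H$ — should follow from a single observation that uses nothing about the rotation angles, only that $R_1(\phi)$ and $R_2(\pi)$ are isometries of $\RR^3$ fixing their axes pointwise. Indeed, if $p$ is any point of $\ell_i$, then $R_i(p)=p$, so
\[
|p-C| = |p - R_i(B)| = |R_i(p) - R_i(B)| = |p - B|,
\]
hence $p\in H$; as this holds for every $p\in\ell_i$ we conclude $\ell_1\subset H$ and $\ell_2\subset H$. (Here $R_1(\phi)B=C$ and $R_2(\pi)B=C$ are the two hypotheses, used one at a time.)

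For the second claim I would exploit that a rotation by angle $\pi$ is an involution. Since $R_2(\pi)$ is a rigid motion it is affine, hence commutes with taking midpoints; and $R_2(\pi)\circ R_2(\pi)=I$ together with $R_2(\pi)B=C$ gives $R_2(\pi)C=R_2(\pi)\bigl(R_2(\pi)B\bigr)=B$. Therefore
\[
R_2(\pi)(M) = \tfrac12\bigl(R_2(\pi)B+R_2(\pi)C\bigr) = \tfrac12(C+B) = M,
\]
so $M$ is fixed by $R_2(\pi)$. Finally one invokes that the fixed-point set of a nontrivial half-turn is precisely its axis (in coordinates with $\ell_2$ the $z$-axis, $R_2(\pi)\colon(x,y,z)\mapsto(-x,-y,z)$, whose fixed locus is the line $x=y=0$); the half-turn is nontrivial because $B\neq C$ forces $R_2(\pi)\neq I$. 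Hence $M\in\ell_2$, as required.

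I do not expect a serious obstacle here; the only point needing a little care is the very last one — that the fixed set of the $\pi$-rotation is exactly the line $\ell_2$ and not something larger — which is precisely where the hypothesis $B\neq C$ is used. The midpoint statement could equally be obtained by the explicit coordinate computation just indicated: writing $B=(x_0,y_0,z_0)$ with $\ell_2$ the $z$-axis gives $C=R_2(\pi)B=(-x_0,-y_0,z_0)$ and $M=(0,0,z_0)\in\ell_2$, the involution argument being only a coordinate-free repackaging. It is worth noting the lemma claims nothing about $\ell_1$ passing through $M$, and indeed it need not: a rotation by a general angle $\phi$ carrying $B$ to $C$ can have its axis located anywhere in the plane $H$, so it is genuinely the special structure of the $\pi$-rotation that is being used in the second half.
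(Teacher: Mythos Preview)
Your proof is correct and follows essentially the same approach as the paper's: equidistance of an orbit point from the axis gives the bisector claim, and the half-turn structure of $R_2(\pi)$ gives the midpoint claim. Your version is simply more detailed---the paper dispatches both claims in two sentences by invoking that orbits of a rotation are equidistant from the axis and that a $\pi$-rotation is a line reflection---but the underlying ideas are identical.
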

\begin{proof}
The bisector of $BC$ is the plane consisting of points equidistant from $B$ and $C$. The orbits of a rotation consist of points equidistant from its axis. Hence the axes lie in the bisector of $B$ and $C$.
  The rotation $R_2(\pi)$ is a reflection in the line $\ell_2$ and if
  $B$ is mapped to $C$ then $\ell_2$ intersects the line segment $BC$
  orthogonally in the midpoint. 
\end{proof}
We immediately have:
\begin{corollary} \label{casescor}
  With the same assumptions as in Lemma~\ref{thm:axis_in_bisector}, it follows that
  $\ell_1$ and $\ell_2$ are either equal, parallel or intersect at a
  point.
\end{corollary}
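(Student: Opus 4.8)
The plan is to observe that this corollary is an immediate consequence of Lemma~\ref{thm:axis_in_bisector} together with a standard fact of affine plane geometry, so there is essentially nothing to prove beyond bookkeeping.

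First I would invoke Lemma~\ref{thm:axis_in_bisector} to conclude that both $\ell_1$ and $\ell_2$ lie in the bisector plane $\Pi$ of $B$ and $C$; the only purpose of this step is to place the two lines inside a common $2$-dimensional affine subspace of $\real^3$. Next I would recall the elementary classification of a pair of lines lying in a fixed plane: either the two lines coincide, or they are disjoint, in which case (being coplanar) they must be parallel, or they meet in exactly one point. A skew configuration is impossible precisely because skew lines do not lie in a common plane. Applying this trichotomy to $\ell_1, \ell_2 \subset \Pi$ yields exactly the three stated alternatives.

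I do not expect any genuine obstacle here: all of the geometric content is already carried by Lemma~\ref{thm:axis_in_bisector}, and the residual argument is nothing more than the classification of coplanar lines. The only point one might wish to state explicitly is why two lines in a common plane cannot be skew, but this is a basic fact of affine geometry and requires no elaboration; the extra information from the lemma that $\ell_2$ passes through $(B+C)/2$ is not even needed for this statement.
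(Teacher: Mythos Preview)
Your proposal is correct and matches the paper's approach: the paper writes ``We immediately have'' before stating the corollary and gives no separate proof, so the intended argument is exactly the one you outline---both lines lie in the bisector plane by Lemma~\ref{thm:axis_in_bisector}, and coplanar lines are equal, parallel, or intersecting.
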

\begin{lemma}
  \label{thm:two_rotations}
 Suppose we have the same situation as in
  Lemma~\ref{thm:axis_in_bisector}, and suppose further that 
  there is a line
  $\ell_B$ through $B$ that is mapped by both rotations to a line $\ell_C$ through $C$.
  Then we have one of the
  following three possibilities:
  \begin{itemize}
  \item  $\ell_1=\ell_2$ and $\phi=\pi$, i.e.,
    $R_1(\phi)=R_2(\pi)$. The lines $\ell_B$ and $\ell_C$ are
    symmetric around $\ell_1=\ell_2$.
  \item The lines $\ell_1$, $\ell_2$, $\ell_B$, and $\ell_C$ are
    parallel.
  \item The lines $\ell_1$, $\ell_2$, $\ell_B$, and $\ell_C$ intersect
    in a single point $O$.
  \end{itemize}
\end{lemma}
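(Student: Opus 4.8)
The plan is to pass to the single composite motion $\sigma := R_2(\pi)^{-1}\circ R_1(\phi)$, which (as $R_2(\pi)$ is a half-turn, hence an involution) equals $R_2(\pi)\circ R_1(\phi)$. Since $R_1(\phi)B=C=R_2(\pi)B$, the motion $\sigma$ fixes $B$; and since both rotations carry $\ell_B$ onto $\ell_C$, the motion $\sigma$ carries $\ell_B$ onto itself. A direct isometry that fixes the point $B$ and preserves the line $\ell_B$ through $B$ is either the identity, a rotation about $\ell_B$, or a half-turn about a line through $B$ orthogonal to $\ell_B$. The first goal is to rule out this last option, so that $R_1(\phi)=R_2(\pi)\circ S$ with $S$ either the identity or a rotation about $\ell_B$; the three alternatives in the statement will then drop out of Corollary~\ref{casescor}, which tells us that $\ell_1$ and $\ell_2$ are equal, parallel, or meet in a single point $O$.

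For the three cases I would argue as follows. If $\ell_1=\ell_2=:\ell$, then $B\notin\ell$ (otherwise $R_1(\phi)B=B\neq C$), and the coaxial composite $\sigma$ is the rotation about $\ell$ through $\phi+\pi$; a rotation about $\ell$ fixing the off-axis point $B$ is trivial, so $\phi\equiv\pi$ and $R_1(\phi)=R_2(\pi)$, whence $\ell_B$ and $\ell_C$ are interchanged by the half-turn about $\ell$ and so are symmetric about it --- the first alternative. If $\ell_1\parallel\ell_2$ and they are distinct, then $\sigma$ is a composite of rotations about parallel axes, hence a rotation about a parallel axis or a translation; it is not a non-trivial translation (it fixes $B$) and not the identity (that would give $R_1(\phi)=R_2(\pi)$ and hence $\ell_1=\ell_2$), so $\sigma$ is a non-trivial rotation with axis parallel to $\ell_1$; it is moreover not a half-turn, since its angle is $\phi+\pi$ and $\phi\not\equiv 0$, so by the previous paragraph its axis is $\ell_B$. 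Thus $\ell_B\parallel\ell_1\parallel\ell_2$, and then $\ell_C=R_1(\phi)(\ell_B)$, being the image of a line parallel to the axis $\ell_1$ under a rotation about $\ell_1$, is also parallel to $\ell_1$ --- the second alternative. If $\ell_1\cap\ell_2=\{O\}$, then $\sigma$ fixes both $B$ and $O$ (the latter lying on both axes), and $\sigma\neq I$ as before, so the axis of $\sigma$ is the line $OB$; granting that this axis is $\ell_B$, we get $O\in\ell_B$, and then $\ell_C=R_1(\phi)(\ell_B)$ passes through $R_1(\phi)(O)=O$, so all four lines meet at $O$ --- the third alternative.

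The step I expect to be the real obstacle is the one flagged above: in the intersecting case one must still exclude the possibility that $\sigma$ is a half-turn about the line $OB$ with $OB\perp\ell_B$, a configuration that is genuinely compatible with the hypotheses as literally stated and that satisfies none of the three conclusions. What closes this gap is that in the setting at hand the rotations do not merely permute the unoriented edge-lines but carry a chosen tangent direction at $B$ to a tangent direction at $C$, so that $\sigma$ in fact fixes a nonzero vector based at $B$; an orientation-preserving isometry that fixes a point and a direction through it is a rotation about the corresponding line, and the exceptional half-turn is excluded. I would therefore record this oriented-tangent bookkeeping at the very start, after which the case analysis above goes through unchanged.
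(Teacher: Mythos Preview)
Your approach is essentially the paper's: form $\sigma=R_2(\pi)R_1(\phi)$, observe it fixes $B$ and $\ell_B$, and then split according to Corollary~\ref{casescor}. The paper simply asserts ``so it is a rotation around $\ell_B$'' and proceeds, whereas you correctly note that a direct isometry fixing $B$ and preserving the unoriented line $\ell_B$ could also be a half-turn about a line through $B$ orthogonal to $\ell_B$. Your observation is genuine: with the lemma's hypotheses read literally (lines, not directed lines), such configurations exist and fall outside all three stated alternatives. Your proposed fix---record that in the intended application the rotations carry a chosen unit vector along $\ell_B$ to one along $\ell_C$, so that $\sigma$ fixes a nonzero direction at $B$ and hence has axis $\ell_B$---is exactly right and is implicit in the paper's context, where $\ell_B,\ell_C$ are surface normal lines and the rigid motions preserve normals. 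So your plan matches the paper's proof, but with this gap made explicit and closed.
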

\begin{proof}
  In all cases $R=R_2(\pi)^{-1}R_1(\phi)=R_2(\pi)R_1(\phi)$ is a
  rotation preserving $B$ and $\ell_B$. So it is a rotation around $\ell_B$.

  Consider the three possibilities of Corollary \ref{casescor}. 
  If $\ell_1=\ell_2$ then $R$ is a rotation through the angle
  $\pi+\phi$ around the axis $\ell_1=\ell_2$. This can only be the case
  if either $\ell_B=\ell_1=\ell_2$ or if $\phi=\pi$. That is,
  either $B=C$ and $\ell_B=\ell_C=\ell_1=\ell_2$ or $\phi=\pi$.

  If $\ell_1\neq\ell_2$ are parallel then $R$ is a non-trivial rotation 
  that preserves the plane orthogonal to $\ell_1$ and
  $\ell_2$. So $\ell_B$, the axis of rotation, must be parallel to
  $\ell_1$ and $\ell_2$ and then the same is true for $\ell_C=R_2(\pi)\ell_B$.

  Finally, if $\ell_1$ and $\ell_2$ intersect at a point $O$ then $R$
  is a non-trivial rotation that preserves $O$. As
  $R_2(\pi)$ preserves $O$ but not $B$ we have $B\neq O$ and $\ell_B$
  is a line through $O$ and $B$ and then $\ell_C=R_2(\pi)\ell_B$ is a
  line through $O$ and $C$.
\end{proof}

\begin{figure}[htp]
  \centering
  \unitlength=.2\textwidth
  \begin{tabular}{cccc}
    \includegraphics[height=\unitlength]{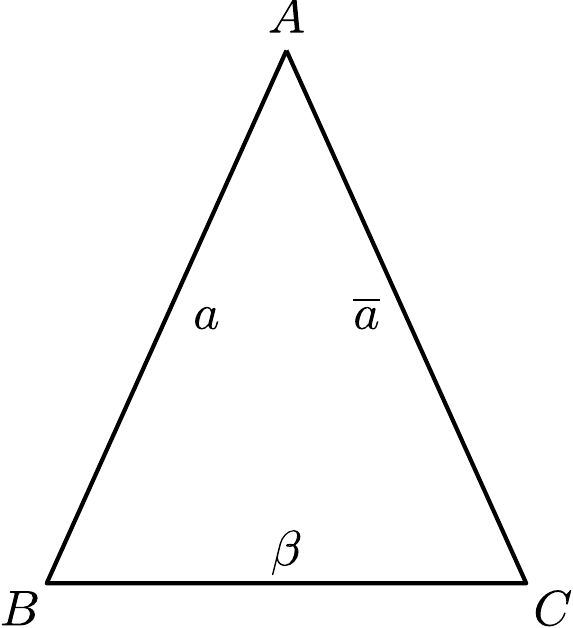}
    &\includegraphics[height=\unitlength]{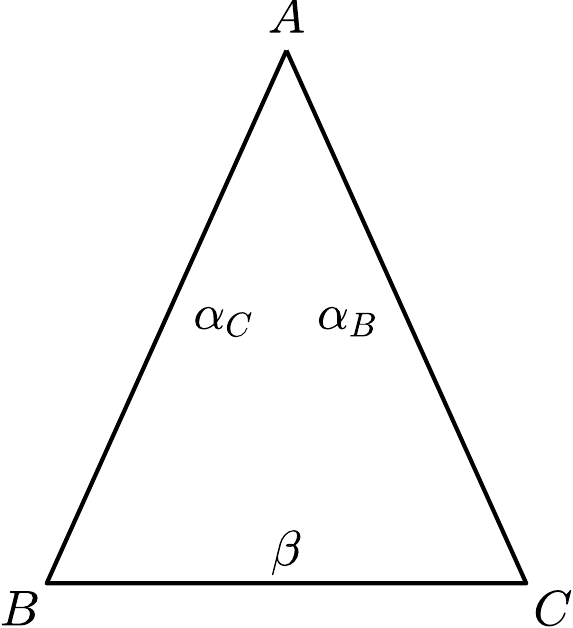}
    &\includegraphics[height=\unitlength]
      {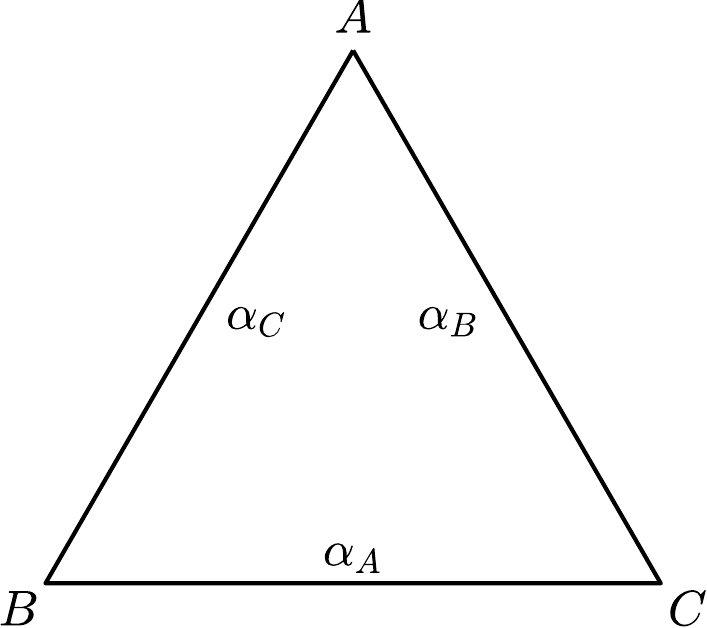}
    &\includegraphics[height=\unitlength]
      {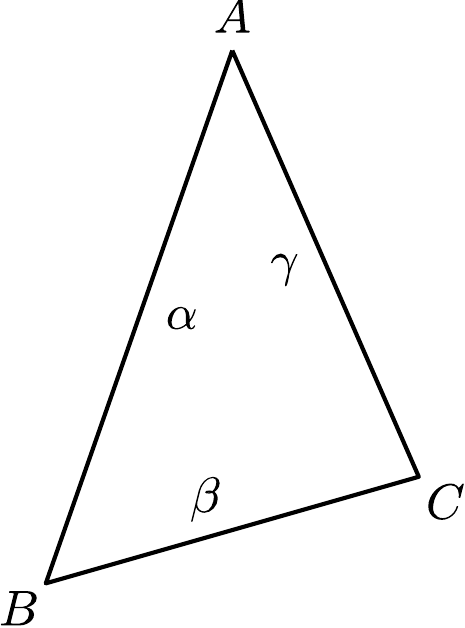}
    \\
    Type 1&Type 2&Type 3&Type 4
  \end{tabular}

  \caption{The four interior cases.}
  \label{fig:triangle4cases}
\end{figure}

We now suppose we have a monotiling of a complete surface by a 3-edge tile. 
In all four cases (Figure~\ref{fig:triangle4cases}) we have a normal line at each corner: $\ell_A$,
$\ell_B$, and $\ell_C$. We furthermore have a symmetry axis:
$\ell_{\alpha}$, $\ell_{\beta}$, and $\ell_{\gamma}$ at the symmetric
edges ($\alpha$, $\alpha_A$, $\alpha_B$, $\alpha_C$, $\beta$, and
$\gamma$). 
This is the situation of Lemma ~\ref{thm:two_rotations}. For the first case of the lemma
 the rotation $R_A(\phi)=R_{\beta}(\pi)$,
  where $\phi=\pi$, maps the boundary of the triangle to itself. The result is a surface with just two
  copies of the same tile, like a $1$-edge tile. 
  We thus exclude this uninteresting case from the definition of Types 1-4.  
  Lemma ~\ref{thm:two_rotations} then implies:
\begin{lemma}
  \label{lemma:normals}
  For tiles of type 1, 2, and 3 either:
  \begin{enumerate}
  \item the lines $\ell_A$, $\ell_B$, $\ell_C$, and $\ell_{\beta}$
    are parallel, or
  \item the lines $\ell_A$, $\ell_B$, $\ell_C$, and $\ell_{\beta}$
    ($\ell_{\alpha_A}$ in Type 3)
    intersect at a common point $O$ lying in the bisector of the edge
    $BC$.
  \end{enumerate}
\end{lemma}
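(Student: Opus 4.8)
The plan is to apply Lemma~\ref{thm:two_rotations} to each corner of the tile in turn, using the edge-matching structure to identify which rotations map which normal line to which. The key observation is that at corner $A$ (the vertex between the two $\alpha$-type edges in Types 1--3, or the analogous vertex), there is a rotation $R_A(\phi)$ about the normal line $\ell_A$ taking one edge emanating from $A$ to the other, and there is also the $\pi$-rotation $R_\beta(\pi)$ (or $R_{\alpha_A}$ in Type~3) about the symmetry axis of the $\beta$ edge, which swaps its endpoints $B$ and $C$. Since $R_\beta(\pi)$ carries $B$ to $C$ and also carries the normal line $\ell_B$ to the normal line $\ell_C$ (because congruences relating identically labelled edges preserve the surface normal along the edge, hence at the endpoints), and similarly $R_A(\phi)$ carries $B$ to $C$ while taking $\ell_B$ to $\ell_C$, we are exactly in the hypotheses of Lemma~\ref{thm:two_rotations} with $\ell_1 = \ell_A$, $\ell_2 = \ell_\beta$, $\ell_B$ and $\ell_C$ the two normal lines.

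First I would carefully verify, case by case for Types 1, 2, 3, that such a pair of rotations $R_A(\phi)$ and $R_\beta(\pi)$ exists with $R_A(\phi)B = R_\beta(\pi)B = C$ and both mapping $\ell_B$ to $\ell_C$. This is where the combinatorial bookkeeping of the edge labels matters: in Type~1 ($a\bar a\beta$) the two edges at $A$ are $a$ and $\bar a$, related by a rotation about $\ell_A$; in Type~2 ($\alpha\alpha\beta$) the two edges at $A$ are the two $\alpha$'s; in Type~3 ($\alpha\alpha\alpha$) one picks the vertex between two of the $\alpha$-edges and uses $\ell_{\alpha_A}$ for the third edge. In each case the $\pi$-rotation about the third edge's symmetry axis interchanges the other two corners. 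Having set this up, the first case of Lemma~\ref{thm:two_rotations} ($\ell_1 = \ell_2$, $\phi = \pi$) is precisely the degenerate ``two-copies'' surface which we have just excluded from the definition of Types 1--4, so it does not occur. The remaining two alternatives of Lemma~\ref{thm:two_rotations} give: either $\ell_A, \ell_\beta, \ell_B, \ell_C$ are all parallel, or they all meet at a single point $O$; and in the intersecting case, since $\ell_\beta$ passes through $(B+C)/2$ and lies in the bisector of $BC$ by Lemma~\ref{thm:axis_in_bisector}, the common point $O$ lies in that bisector.

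The one remaining gap is that the above argument directly involves only $\ell_A$, $\ell_B$, $\ell_C$, $\ell_\beta$; I would then note that running the same argument at corner $B$ (using the rotation about $\ell_B$ and the $\pi$-rotation about the symmetry axis of the edge opposite $B$) and at corner $C$ forces all three normal lines into a consistent configuration. In the parallel alternative this is immediate; in the intersecting alternative one checks the three pairwise intersection points coincide — two normal lines determine $O$, and the third must pass through it because it is constrained by a second instance of the lemma sharing two of the four lines. Care is needed in Types 1 and 2 where only one symmetry axis ($\ell_\beta$, resp.\ $\ell_\beta$) is available; there one uses the normal-line rotations at the two non-$A$ corners together with $\ell_\beta$ to pin down the configuration.

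The main obstacle I anticipate is the case analysis verifying the hypotheses of Lemma~\ref{thm:two_rotations} hold at the relevant corner for each of Types 1, 2, 3 — specifically, checking that the \emph{same} line $\ell_B$ through $B$ is mapped to the \emph{same} line $\ell_C$ by \emph{both} rotations. For the normal lines this follows from the normal-preservation property of the edge congruences, but one must be sure that the edge emanating from $B$ that gets mapped correctly under $R_A(\phi)$ is compatible with the one mapped under $R_\beta(\pi)$; this is a matter of tracking edge labels around the boundary of the tile and is routine but must be done without error. A secondary subtlety is Type~3, where the choice of which two of the three $\alpha$-edges meet at the distinguished corner, and hence which $\ell_{\alpha_A}$ to use, must be made consistently, but by the symmetry of Type~3 (established in Lemma~\ref{vertextypes}) any choice works.
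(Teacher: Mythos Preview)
Your approach is correct and essentially the same as the paper's: a single application of Lemma~\ref{thm:two_rotations} with $\ell_1 = \ell_A$, $\ell_2 = \ell_\beta$ (resp.\ $\ell_{\alpha_A}$), after excluding the degenerate first alternative, already yields the parallel/concurrent dichotomy for all four lines $\ell_A,\ell_\beta,\ell_B,\ell_C$ at once, and $O$ lies in the bisector because $O\in\ell_\beta$. Your third and fourth paragraphs are therefore unnecessary --- there is no remaining gap, since those four lines are precisely the ones appearing in the statement of Lemma~\ref{lemma:normals}.
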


This allows a description of all possible tilings of Type 1:
\begin{theorem}  \label{thm:type1}
  Let $S$ be complete monotiled surface tiled by a 3-edge tile of
  Type~1. Then the polyhedral surface formed from the vertices of $S$ is either:
  \begin{enumerate}
    \item A surface obtained from one of the three regular tilings of the plane (equilateral triangle, square,  regular hexagon),
    by lifting the center of each face the same height above the plane to form a pyramid.
  \item Either a symmetric bipyramid over a regular polygon, or a surface obtained from one of the five platonic solids (tetrahedron, cube,
    octahedron, dodecahedron, or icosahedron),  by attaching/excavating a pyramid to/from each face. The 
    apexes of the pyramids all lie on a sphere concentric with the platonic solid.
  \end{enumerate}
\end{theorem}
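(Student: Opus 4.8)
The plan is to reconstruct the vertex polyhedron of $S$ from the combinatorics of the tiling and then to feed in the geometric dichotomy of Lemma~\ref{lemma:normals}. A Type~1 tile has edge word $a\bar a\beta$, so by Lemma~\ref{vertextypes}(2) every vertex of $S$ is of one of two kinds: an \emph{apex} vertex, at which only $a$-edges meet and every incident tile presents its corner $A$, or a \emph{base} vertex, of type $(\beta a\beta a\dots)$, at which the incident tiles present their corners $B$ and $C$ alternately. Since the interior angle $\tA$ at corner $A$ is a fixed number, every apex vertex has the same degree $d:=2\pi/\tA\ge 3$, and the $d$ tiles around an apex vertex are cyclically permuted by a rotation $g=R_A(2\pi/d)$ about the corner normal $\ell_A$ that carries each tile onto a neighbour across an $a$-edge. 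Consequently the $\beta$-edges together with the base vertices form a polyhedral surface $P$ whose faces are exactly the links of the apex vertices; the link of an apex vertex is a single $g$-orbit of $d$ base vertices, which lie equally spaced on a circle about $\ell_A$ and so are the vertices of a regular $d$-gon, and all these $d$-gons are mutually congruent because the tiles are. The polyhedral surface on the vertices of $S$ is thus $P$ with a pyramid coned over each face, the apex of the pyramid over a given face being the corresponding apex vertex, which lies on the axis $\ell_A$ through the centre of that regular $d$-gon. Finally, the isometries identifying adjacent tiles — for a Type~1 tile, rotations about corner normals and the $\pi$-rotation $R_\beta$ about the symmetry axis $\ell_\beta$ of a $\beta$-edge — generate a group $G\le SE(3)$ acting transitively on the tiles, hence on the base vertices and on the faces of $P$; and the rotation $R_A(2\pi/d)$, which restricts to the $d$-fold rotational symmetry of a face of $P$, lies in $G$.

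Then I would split according to Lemma~\ref{lemma:normals}. In the parallel case, take the common direction of the normal lines to be vertical. Since $R_\beta(\pi)$ is a rotation through $\pi$ about the vertical axis $\ell_\beta$ and it carries $B$ to $C$, the two endpoints of every $\beta$-edge have the same height, so, $S$ being connected, all base vertices lie in one horizontal plane $\Pi$. Then $P$ is an edge-to-edge tiling of $\Pi$ by mutually congruent regular $d$-gons, which — by the angle condition at a base vertex, equivalently because $d\in\{3,4,6\}$ are the only values for which congruent regular $d$-gons tile the plane edge-to-edge — must be one of the three regular tilings of the plane. The same argument with $R_\beta$ shows all the pyramids lie on one side of $\Pi$, and congruence of the tiles puts all the apex vertices at a common height above the centres of the faces. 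This is conclusion~(1).

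In the concurrent case the common point $O$ lies on the bisector of $BC$ by Lemma~\ref{lemma:normals}. Every generator of $G$, being a rotation whose axis is a corner normal or the axis $\ell_\beta$, fixes $O$; hence $G$ fixes $O$, $G$ is finite, and all vertices of $S$ lie on two spheres centred at $O$, the apex vertices on one of radius $r_A$ and the base vertices on one of radius $r_B$. Therefore $P$ is a compact polyhedral surface inscribed in a sphere, all of whose faces are congruent regular $d$-gons, on whose faces $G$ acts transitively with the $d$-fold rotation of each face among its symmetries; the classification of such (possibly degenerate) regular polyhedra leaves only the five Platonic solids, with $d\in\{3,4,5\}$, and the $n$-gonal dihedron. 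For the dihedron, coning its two faces to their apexes gives two pyramids over a common regular $n$-gon whose apexes are at distance $r_A$ from $O$; since $R_\beta(\pi)\in G$ fixes $O$ and interchanges the two pyramids, their apexes are antipodal on the line through $O$ perpendicular to the $n$-gon, i.e.\ a symmetric bipyramid. When $P$ is Platonic, the apex over each face lies on the line from $O$ normal to that face, at distance $r_A$ from $O$, so the coning attaches — or, for small $r_A$, excavates — a congruent pyramid to each face, with all apexes on the concentric sphere of radius $r_A$. This is conclusion~(2).

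The step I expect to be the main obstacle is the rigorous identification of $P$ with a regular polyhedron, respectively a regular plane tiling. One has to verify that $P$ is a genuine polyhedral surface (each $\beta$-edge borders exactly two distinct apex links — which uses $d\ge 3$ — and the regular $d$-gon links neither degenerate nor overlap and fit together edge-to-edge), that the $G$-action is regular enough to force the classical list, and that the degenerate members — the $n$-gonal dihedron, and the excluded value $d=2$ — are handled correctly; granting this, the enumeration itself is the classical classification of the regular polyhedra and of the regular tilings of the plane. A secondary technical point is extracting the precise symmetry assertions of conclusion~(2) — antipodal apexes for the bipyramid, all apexes on a single concentric sphere for the Platonic cases — cleanly from the single fact that $G$ fixes $O$.
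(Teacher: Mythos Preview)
Your proposal is correct and follows essentially the same route as the paper: invoke Lemma~\ref{lemma:normals} to split into the parallel and concurrent cases, observe that the $\beta$-edges around each $A$-vertex form a regular $d$-gon (the orbit under $R_A$), and then recognise the resulting $\beta$-edge polyhedron as a regular tiling of the plane or sphere (the latter yielding a Platonic solid or the degenerate dihedron/bipyramid). Your write-up is in fact more careful than the paper's sketch---you separate apex and base vertices via Lemma~\ref{vertextypes}(2), make the group $G$ explicit, and extract the bipyramid symmetry from $R_\beta\in G$---and the obstacles you flag (nondegeneracy of $P$, the regular-polyhedron classification) are precisely the points the paper glosses over.
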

\begin{proof} 
 Consider the two cases of Lemma~\ref{lemma:normals}:
  If the lines are parallel, a Euclidean motion mapping one tile to another
  is a rotation around an axis parallel to $\ell_A$ and $\ell_{\beta}$
  hence the orbit of $A$ is contained in a plane and the orbits of $B$
  and $C$ are equal and contained in the same or a parallel plane. If
  we look at the vertex $A$ then the neighboring vertices lies in the
  orbit of $B$ and $C$, in fact in the orbit generated by
  $R_{A}(\phi)$. The edges are images of $\beta$ and hence the vertices
  form a regular polygon in the plane with $A$ as its center. 
  The orbits of $\beta$ thus form a regular tiling of the plane.
  
  If the lines meet at a point $O$, any Euclidean motion mapping one tile to another is
  a rotation around an axis through the point $O$. Now the orbit of
  $A$ is contained in a sphere with center $O$ and the orbits of $B$
  and $C$ are equal and contained in the same or a concentric
  sphere. Again the neighbouring vertices form a planar regular
  polygon but now it also lies on a sphere with center $O$.  Thus,
the orbits of $\beta$ form a regular tiling of the sphere and hence a Platonic solid, or a symmetric bipyramid 
over a single copy of the planar polygon.
\end{proof}
\begin{figure}[h!tbp]
	\begin{center}
		\includegraphics[height=25mm]{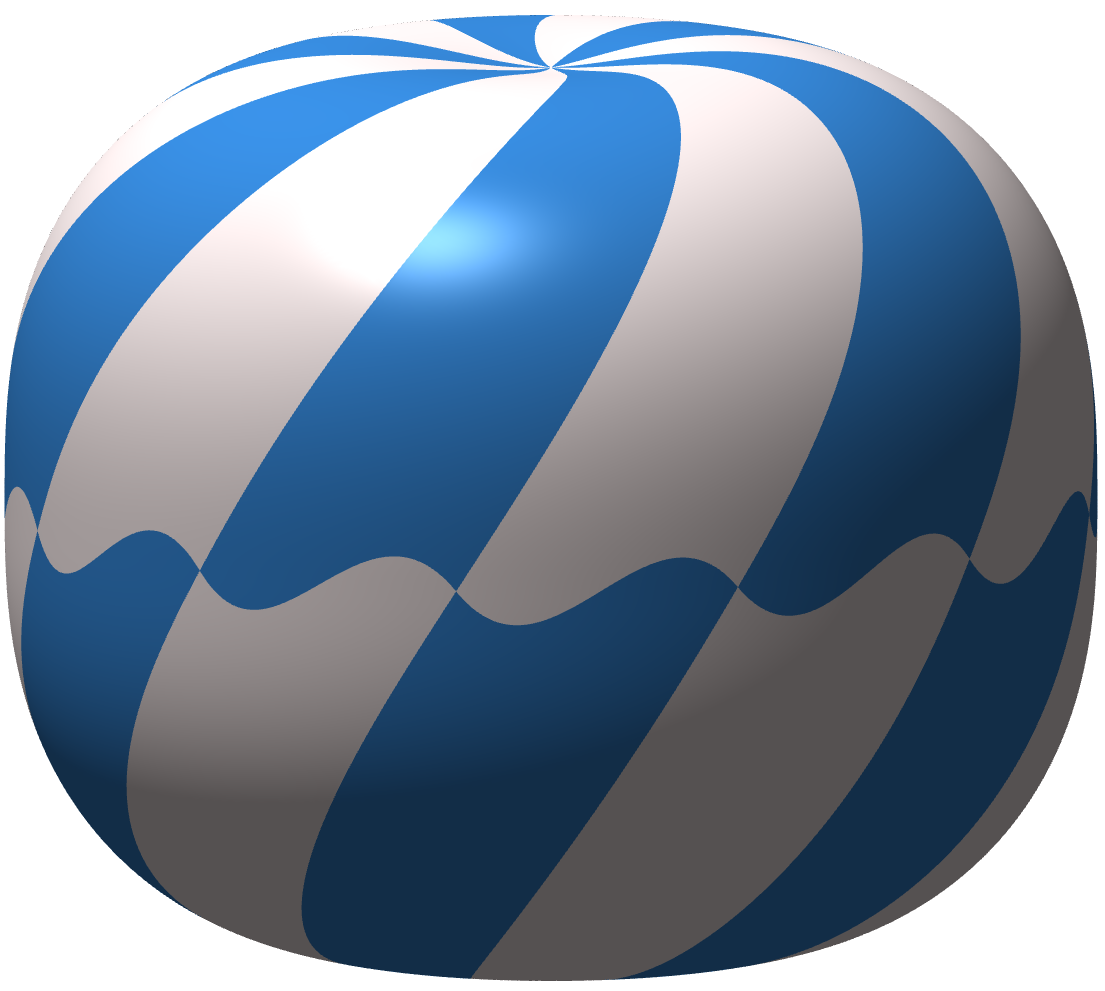} \quad \quad
		\includegraphics[height=25mm]{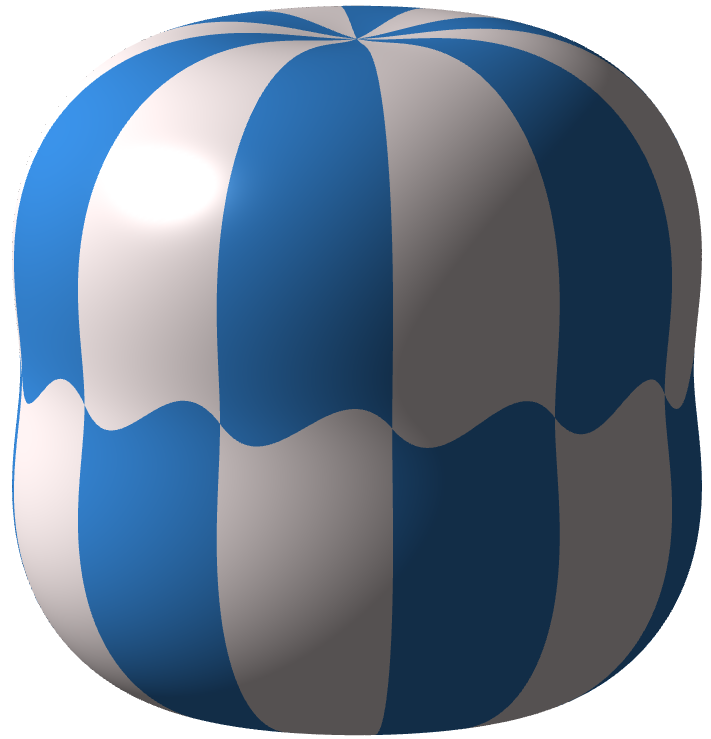} \quad \quad
		\includegraphics[height=26mm]{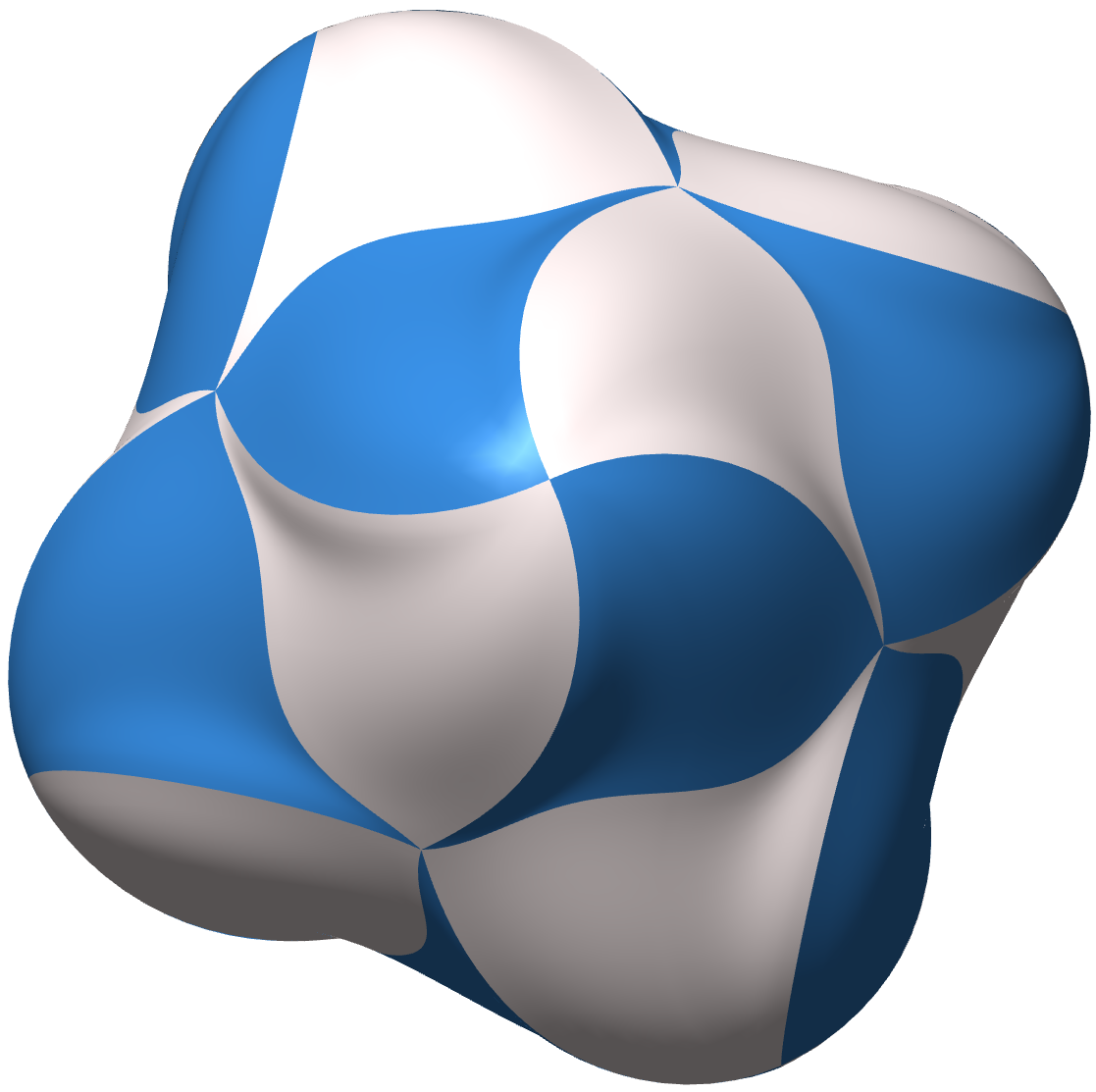} \quad \quad
		\includegraphics[height=25mm]{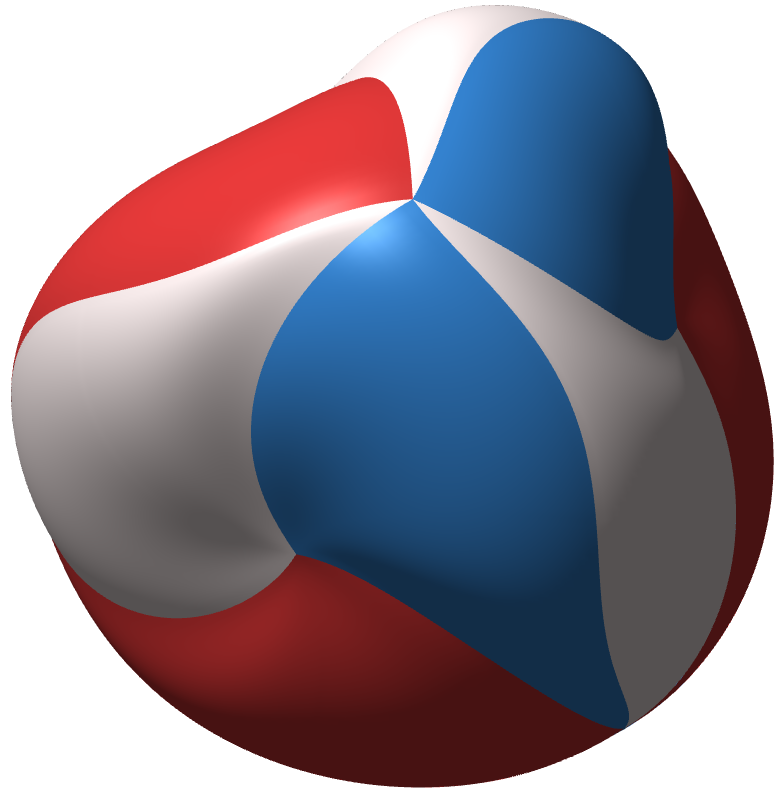}
		\end{center}
	\caption{Left to right: type $a \bar a \beta$, dodecagonal bipyramid structure;
	  type $\alpha \alpha \beta$, dodecagonal bipyramid;
	   type $a \bar a \beta$, tetrakis cube structure;
	   type $a \bar a \beta$ excavated tetrahedron.}
	\label{fig:threeedge}
\end{figure}

Note: surfaces of the type described in Theorem \ref{thm:type1} are easy to construct
by taking a regular tiling of the plane or sphere and then raising the center of each tile smoothly and
respecting the symmetry of the edges, see Figure \ref{fig:threeedge}, right.

\begin{lemma}
  \label{lemma:normals2}
  For tiles of type 2 and 3 we have:
  \begin{enumerate}
  \item If $\ell_A$, $\ell_B$, $\ell_C$, and $\ell_{\beta}/\ell_{\alpha_A}$
    are parallel, then the lines
   $\ell_{\alpha_B}$, and $\ell_{\alpha_C}$ are either both parallel to or both
   orthogonal to the other four lines.  In the case of type 3,  all six lines are parallel.
  \item If the lines $\ell_A$, $\ell_B$, $\ell_C$, and $\ell_{\beta}$
    intersect at a common point $O$, then the lines $\ell_{\alpha_B}$ and $\ell_{\alpha_C}$
    also intersect at the point $O$.
  \end{enumerate}
\end{lemma}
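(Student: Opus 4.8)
The plan is to push the method of Lemma~\ref{lemma:normals} and Theorem~\ref{thm:type1} one tile further out, applying the dichotomy of Lemma~\ref{lemma:normals} not only to $T$ but also to the two tiles adjacent to $T$ across its $\alpha$-edges. Write $\alpha_B$ for the edge joining $C$ and $A$ and $\alpha_C$ for the edge joining $A$ and $B$ (the third edge is $\beta$ in Type~2, $\alpha_A$ in Type~3), with symmetry axes $\ell_{\alpha_B},\ell_{\alpha_C}$, and put $g_B=R_{\alpha_B}(\pi)$, $g_C=R_{\alpha_C}(\pi)$. These $\pi$-rotations are exactly the rigid motions carrying $T$ onto its neighbours $T'=g_B(T)$ and $T''=g_C(T)$ across those edges, since the adjacency map across a self-congruent edge restricts to that edge's unique $\pi$-symmetry and is thereby determined. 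In particular $g_B$ swaps $A\leftrightarrow C$, interchanges the normal lines $\ell_A\leftrightarrow\ell_C$ (normals match along the shared edge), and sends $\ell_B$ to the third corner normal of $T'$; symmetrically for $g_C$.

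First I would note that $T'$ and $T''$ are tiles of the same complete surface, so Lemma~\ref{lemma:normals} applies to them, and that they are forced into the \emph{same} branch of its dichotomy as $T$: among the three corner normals of $T'$ are $\ell_A$ and $\ell_C$, which are distinct, so $T'$ is of ``parallel'' type precisely when $\ell_A\parallel\ell_C$, and of ``concurrent'' type — then necessarily with the common point $O=\ell_A\cap\ell_C$ — precisely when $\ell_A$ and $\ell_C$ meet. In the concurrent case this forces the third corner normal $g_B(\ell_B)$ of $T'$ to pass through $O$ as well; hence $g_B$ carries the two distinct lines $\ell_A,\ell_B$ through $O$ onto the two distinct lines $\ell_C,g_B(\ell_B)$ through $O$, so $g_B$ fixes $\ell_A\cap\ell_B=\{O\}$ and $O\in\ell_{\alpha_B}$. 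The mirror argument with $g_C$ puts $O$ on $\ell_{\alpha_C}$ (and in Type~3 likewise on $\ell_{\alpha_A}$, already given by Lemma~\ref{lemma:normals}), which is part~(2).

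For part~(1), in the parallel case the third corner normal $g_B(\ell_B)$ of $T'$ is parallel to the common direction $u$; since a $\pi$-rotation sends a line of direction $u$ to a line of direction $\pm u$ exactly when its axis is parallel or orthogonal to $u$, the axis $\ell_{\alpha_B}$ must be parallel or orthogonal to $u$, and the same for $\ell_{\alpha_C}$. To see that $\ell_{\alpha_B}$ and $\ell_{\alpha_C}$ are of the \emph{same} kind, I would use that the two $\alpha$-edges are mutually congruent: any rigid motion carrying $\alpha_B$, with its normal field, onto $\alpha_C$ carries $\ell_{\alpha_B}$ onto $\ell_{\alpha_C}$ and the endpoint normals $\{\ell_C,\ell_A\}$ onto $\{\ell_A,\ell_B\}$, so the angle between an $\alpha$-axis and an endpoint normal is the same for both edges; but $\ell_A,\ell_B,\ell_C$ are all parallel to $u$, so this common angle equals $\angle(\ell_{\alpha_B},u)=\angle(\ell_{\alpha_C},u)$, which by the preceding sentence is $0$ or $\pi/2$ for both. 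In Type~3 one wants all six lines parallel: here $\ell_{\alpha_A}$ is parallel to $u$ by hypothesis, and the order-$3$ symmetry $R$ of the $\alpha\alpha\alpha$ prototile (from the proof of Lemma~\ref{vertextypes}) cyclically permutes the three corner normals and the three edge axes; since $R$ maps $\ell_A\parallel u$ to $\ell_B\parallel u$ we get $R(u)=\pm u$, and a rotation of order $3$ has no eigenvalue $-1$, so $R(u)=u$ and $\ell_{\alpha_B},\ell_{\alpha_C}$, being the images of $\ell_{\alpha_A}$ under $R$ and $R^{2}$, are parallel to $u$ too.

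The step I expect to be the main obstacle is the geometric bookkeeping underlying the first two paragraphs: checking rigorously that $g_B,g_C$ are the adjacency rigid motions, that the neighbouring tiles really do land in the same branch of Lemma~\ref{lemma:normals}, and (for the ``both of the same kind'' conclusion) that the congruence of the two $\alpha$-edges can be leveraged as above. One must also clear the degenerate possibilities — most notably $\ell_A=\ell_C$, and coincidences among the corner normals — which are excluded by the standing assumption ruling out the trivial two-tile configuration.
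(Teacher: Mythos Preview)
Your argument is correct, but it takes an unnecessary detour through the neighbouring tiles $T'=g_B(T)$ and $T''=g_C(T)$. The paper stays entirely inside the single tile $T$: since the $\pi$-rotation $g_B=R_{\alpha_B}(\pi)$ by its very definition preserves the edge $\alpha_B$ together with its normal field, it already interchanges the endpoint normal lines $\ell_A\leftrightarrow\ell_C$. In the concurrent case this alone gives $g_B(O)\in\ell_A\cap\ell_C=\{O\}$, so $O\in\ell_{\alpha_B}$; in the parallel case $g_B$ sends the line $\ell_A$ of direction $u$ to $\ell_C$ of direction $u$, forcing the axis $\ell_{\alpha_B}$ to be parallel or orthogonal to $u$. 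No appeal to $T'$ or to a second application of Lemma~\ref{lemma:normals} is needed. For the ``both of the same kind'' step the paper again uses an intrinsic symmetry of $T$: the rotation $R_A(\phi)$ about $\ell_A$ (which exists because the two $\alpha$-edges at $A$ are congruent) carries $\ell_{\alpha_C}$ onto $\ell_{\alpha_B}$, and since its axis is parallel to $u$ it preserves the angle with $u$. For Type~3 the paper uses the corner rotations $R_B(\phi),R_C(\phi)$ (axes $\ell_B,\ell_C\parallel u$) to carry $\ell_{\alpha_A}$ onto $\ell_{\alpha_C},\ell_{\alpha_B}$ respectively; your order-$3$ symmetry argument is an equally valid alternative.

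One small caution: your sentence ``these $\pi$-rotations are exactly the rigid motions carrying $T$ onto its neighbours\ldots\ and are thereby determined'' overstates the situation. Tiles of Types~2 and~3 are not rigid, so the actual neighbour across $\alpha_B$ in a given surface may arise from a rigid motion sending $\alpha_C$ (or, in Type~3, $\alpha_A$) onto the shared edge rather than from $g_B$. This does not damage your proof, because Lemma~\ref{lemma:normals} is a statement about the geometry of any tile congruent to $T$, not about membership in the surface; but the justification you give for invoking it on $T'$ should be rephrased accordingly.
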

\begin{proof}
For tiles of type 2,  the reflection around $\ell_{\alpha_B}$ maps the line
  $\ell_A$ to the parallel line $\ell_C$. Hence $\ell_{\alpha_B}$ has
  to be parallel with $\ell_A$ or orthogonal to $\ell_A$. As a
  rotation around $\ell_A$ maps $\ell_{\alpha_C}$ to $\ell_{\alpha_B}$
  they are either both parallel with $\ell_A$ or both orthogonal to
  $\ell_A$. In the first case the reflections around the parallel
  lines $\ell_{\alpha_B}$ and $\ell_{\alpha_C}$ maps $A$ to C and B,
  respectively, so the points $A$, $B$, and $C$ lie in an orthogonal
  plane.   For Type 3, $R_B(\phi)$ maps $\ell_{\alpha_A}$ to $\ell_{\alpha_C}$ and
  $R_C(\phi)$ maps $\ell_{\alpha_B}$ to $\ell_{\alpha_A}$.  Since $\ell_{\alpha_A}$ is parallel 
  to both axes of rotation,  so are  $\ell_{\alpha_B}$ and $\ell_{\alpha_C}$.

  If $\ell_A$ and $\ell_{\beta}$ intersect at a point $O$ then
  $R_A(\pm\phi)R_{\beta}(\pi)$ preserves $O$ and hence both $\ell_B$
  and $\ell_C$ goes through $O$.
The reflection around $\ell_{\alpha_B}$ maps the
  line $\ell_A$ to the line $\ell_C$ so it preserves the point $O$,
  i.e., the line $\ell_{\alpha_B}$ goes through $O$. Similar, the line
  $\ell_{\alpha_C}$ goes through $O$.
\end{proof}
This gives a restriction on possible monotiled surfaces for 3-edge tiles of Type 2:
\begin{theorem}  \label{thm:type2}
  Let $S$ be complete monotiled surface tiled by 3-edge tile of
  Type~2.  Then the polyhedral surface formed from the vertices of $S$ is either:
  \begin{enumerate}
  \item An arbitrary edge to edge monotiling of the plane by an isosceles triangle. 
  Furthermore, the normals at the vertices and midpoints of the edges are equal and
    orthogonal to the plane.
  \item \label{cylinder}
The orbit of $A$ lies in a plane and the orbits of $B$ and
    $C$ lies in the same or a parallel plane. The triangle formed by the vertices of 
    a tile is isosceles.  Furthermore, the
    normals at the vertices in the orbit of $A$ are equal and
    orthogonal to the plane and the normals at the vertices in the
    orbit of $B$ and $C$ are the opposite vector.
      \item The underlying polyhedron is an arbitrary triangulation of the
    sphere with copies of a single isosceles triangle. Furthermore,
    the normal at the vertices are orthogonal to the sphere.
  \end{enumerate}
  \end{theorem}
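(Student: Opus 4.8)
The plan is to run the dichotomy supplied by Lemmas~\ref{lemma:normals} and~\ref{lemma:normals2} and, in each resulting configuration, locate the corners of the tile and compute the surface normals there. Write $A$ for the corner of the prototile at which the two $\alpha$-edges meet, $B$ and $C$ for the other two corners, and $\beta$ for the edge $BC$; let $\ell_A,\ell_B,\ell_C$ be the normal lines at these corners and $\ell_{\alpha_B},\ell_{\alpha_C},\ell_\beta$ the $\pi$-rotation axes of the three edges. Since the two $\alpha$-edges are congruent, their chords have equal length, so $|AB|=|AC|$ and the triangle on the corners is isosceles in every case. By Lemma~\ref{lemma:normals} there are two branches: (I) the lines $\ell_A,\ell_B,\ell_C,\ell_\beta$ are all parallel to a direction $\mathbf{d}$; or (II) they meet at a point $O$ in the bisecting plane of $BC$.

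In branch~(II), Lemma~\ref{lemma:normals2}(2) adds that $\ell_{\alpha_B}$ and $\ell_{\alpha_C}$ also pass through $O$. Every motion taking a tile to an adjacent one is either the $\pi$-rotation about one of these three edge-axes or (for an adjacency in which two $\alpha$-edges meet at a common corner) a rotation about the normal line there; in all cases the axis passes through $O$, so the entire group of tiling motions fixes $O$ and every vertex orbit lies on a sphere about $O$. The decisive point is that the $\pi$-rotation about $\ell_{\alpha_B}$ fixes $O$, passes through the midpoint of the chord $AB$, and interchanges $A$ and $B$; hence $|OA|=|OB|$, and similarly $|OA|=|OC|$. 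Thus all vertices lie on one sphere $S$, each vertex normal is the radial line through $O$ and is therefore orthogonal to $S$, and the polyhedron on the vertices is a triangulation of $S$ by the isosceles triangle. This is conclusion~(3).

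In branch~(I), Lemma~\ref{lemma:normals2}(1) splits into two subcases. If $\ell_{\alpha_B},\ell_{\alpha_C}$ are also parallel to $\mathbf{d}$, then (as in the proof of that lemma) the three corners lie in a plane $P$ orthogonal to $\mathbf{d}$; the normals $\ell_A,\ell_B,\ell_C$ are parallel to $\mathbf{d}$, hence orthogonal to $P$; and at the midpoint of each edge the $\pi$-rotation about the ($\mathbf{d}$-parallel) edge-axis fixes that point and, being an orientation-preserving symmetry of the local surface, fixes the normal there, which is thus also parallel to $\mathbf{d}$. So every vertex and every edge-midpoint has normal orthogonal to $P$, the vertex polyhedron is a planar monotiling of $P$ by the isosceles triangle, and inspecting the possible vertex figures (as in Lemma~\ref{vertextypes}, using finite edge type) shows the tiling is edge to edge; this is conclusion~(1). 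If instead $\ell_{\alpha_B},\ell_{\alpha_C}$ are orthogonal to $\mathbf{d}$, then $R_\beta(\pi)$ has axis parallel to $\mathbf{d}$, hence preserves $\mathbf{d}$-heights while swapping $B$ and $C$, so $B$ and $C$ have equal height; and the $\pi$-rotations about $\ell_{\alpha_B},\ell_{\alpha_C}$ reverse $\mathbf{d}$, hence carry the vertical normal $\pm\mathbf{d}$ at $A$ to $\mp\mathbf{d}$ at $B$ and at $C$. Consequently the corners in the orbit of $A$ lie in one plane orthogonal to $\mathbf{d}$ with normal $\pm\mathbf{d}$, those in the orbits of $B$ and $C$ lie in the same or a parallel plane with the opposite normal, and the triangle is isosceles; this is conclusion~(2).

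The step I expect to be the main obstacle is this last subcase together with the general nuisance that an $\alpha\alpha\beta$ prototile need not be rigid (Lemma~\ref{vertextypes}(4)): an $\alpha$-edge can be glued to a neighbour ``apex to apex'' or ``apex to base'', so the tiling motions do not form a transitive group in the naive way, and one must check that the conclusions about the two parallel planes, the directions of the normals, and the edge-to-edge/triangulation combinatorics hold for every consistent choice of gluings, with no hybrid configuration escaping conclusions~(1)--(3). This is made tractable by the fact that Lemmas~\ref{lemma:normals} and~\ref{lemma:normals2} already fix the normal lines independently of the combinatorial pattern; what remains is bookkeeping of vertex figures in the spirit of the proof of Theorem~\ref{ttp}.
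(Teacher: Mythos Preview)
Your proposal is correct and follows essentially the same route as the paper: split via Lemmas~\ref{lemma:normals} and~\ref{lemma:normals2} into the all-parallel, parallel/orthogonal, and concurrent-at-$O$ cases, and in each case use the $\pi$-rotations about the $\alpha$-edge axes (which swap $A$ with a base vertex) to force the vertices into a single plane, two parallel planes, or a sphere, with the stated normals. One small caution: your naming of $\ell_{\alpha_B}$ as the axis through the midpoint of $AB$ is the opposite of the paper's convention (there $\alpha_B$ is the edge opposite $B$, i.e.\ $AC$), but this is purely notational and your argument goes through unchanged after relabeling.
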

\begin{proof}
As in the proof of Theorem  \ref{thm:type1}, if the normals are parallel,
we find that the orbit of $A$ is in one plane, and the orbits of $B$ and $C$ are equal and contained in the
same or a parallel plane.
  If the triangular tile is of type~2 or 3 and all six lines $\ell_A$,
  $\ell_B$, $\ell_C$, $\ell_{\beta}$, $\ell_{\alpha_B}$, and
  $\ell_{\alpha_C}$ are parallel then the reflection around
  $\ell_{\alpha_B}$ maps $A$ to $C$ and hence the orbit containing $A$
  lies in the same plane as the orbit containing $B$ and $C$. That
  implies that the polyhedron is a triangulation of the plane with a
  single isosceles triangle. The normals at a vertex $X$ lie on the
  line $\ell_X$ and as the reflections above maps them to each other
  they are all the same and orthogonal to the plane. 

If  the lines $\ell_{\alpha_B}$
  and $\ell_{\alpha_C}$ are orthogonal to the four parallel lines
  $\ell_A$, $\ell_B$, $\ell_C$, and $\ell_{\beta}$, then the orbit of
  $A$ and the orbit of $B$ and $C$ may lie in two different but
  parallel planes. The normals at all vertices are parallel with the
  line $\ell_A$. The rotation around $\ell_{\alpha_B}$, and
  $\ell_{\alpha_C}$ maps the normals at the orbits of $A$ to the
  normals at the vertices at the orbit of $B$ and $C$. Hence, the
  normal at the orbit of $A$ is the opposite of the normal at the
  vertices of $B$ and $C$.

  If  all six lines $\ell_A$,
  $\ell_B$, $\ell_C$, $\ell_{\beta}$, $\ell_{\alpha_B}$, and
  $\ell_{\alpha_C}$ intersect at the point $O$ then the reflection
  around $\ell_{\alpha_B}$ maps $A$ to $C$ and hence the orbit
  containing $A$ lies on the same sphere as the orbit containing $B$
  and $C$. That implies that the polyhedron is a triangulation of the
  sphere with a single isosceles triangle. 
\end{proof}
Note that Item \ref{cylinder} of Theorem \ref{thm:type2} includes infinite round cylinders. It is not clear to the authors whether other complete solutions exist.

Similarly we obtain:
\begin{theorem}  \label{thm:type3}
  Let $S$ be complete monotiled surface tiled by 3-edge tile of
  Type~3.  The polyhedral surface formed from the vertices of $S$ is either:
  \begin{enumerate}
  \item An arbitrary edge to edge monotiling of the plane by an equilateral triangle. 
  Furthermore, the normals at the vertices and midpoints of the edges are equal and
    orthogonal to the plane.
      \item The underlying polyhedron is an arbitrary triangulation of the
    sphere with copies of a single equilateral triangle. Furthermore,
    the normal at the vertices are orthogonal to the sphere.
  \end{enumerate}
  \end{theorem}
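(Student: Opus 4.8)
The plan is to run the same argument used for Theorems~\ref{thm:type1} and~\ref{thm:type2}, exploiting that for a Type~3 tile all three edges are self-congruent and mutually congruent. First I would assemble the data attached to the prototile $T$: the surface-normal lines $\ell_A,\ell_B,\ell_C$ at its corners, and the $\pi$-rotation axes $\ell_{\alpha_A},\ell_{\alpha_B},\ell_{\alpha_C}$ of the self-congruences of the three edges $BC$, $CA$, $AB$. Two facts are immediate: a congruence carrying one edge to another carries its chord to the corresponding chord, so $|AB|=|BC|=|CA|$ and the spatial triangle $\triangle ABC$ is equilateral; and by Lemma~\ref{vertextypes}(3) all vertices of the tiling share a common degree. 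I would also recall that the Euclidean motion taking $T$ onto its neighbour across an edge is precisely the $\pi$-rotation in that edge's symmetry axis, so the group $G:=\langle R_{\alpha_A}(\pi),R_{\alpha_B}(\pi),R_{\alpha_C}(\pi)\rangle$ contains every tile-to-tile motion realised along a chain of adjacent tiles; since $S$ is connected every tile is reached in this way (as in the proof of Theorem~\ref{rigiditythm}), so the vertex set of $S$ equals the $G$-orbit of $\{A,B,C\}$. Observe finally that $R_{\alpha_C}(\pi)$ sends the point $A$ to $B$ and $R_{\alpha_B}(\pi)$ sends $A$ to $C$, so $A,B,C$ already lie in a single $G$-orbit.

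Next I would invoke Lemma~\ref{lemma:normals} together with Lemma~\ref{lemma:normals2}, which for a Type~3 tile leave exactly two possibilities. If the six lines $\ell_A,\ell_B,\ell_C,\ell_{\alpha_A},\ell_{\alpha_B},\ell_{\alpha_C}$ are all parallel to a unit vector $v$, then each generator of $G$, being a rotation about an axis parallel to $v$, fixes the $v$-coordinate of every point, so $G$ preserves each plane orthogonal to $v$; hence $A,B,C$ share a $v$-coordinate and the whole vertex set of $S$ lies in one plane $\Pi\perp v$. If instead the six lines meet at a common point $O$ in the bisector of $BC$, then each generator fixes $O$, so $G$ consists of rotations about axes through $O$ and preserves every sphere centred at $O$; hence $|OA|=|OB|=|OC|$ and the whole vertex set of $S$ lies on one sphere $\Sigma$ about $O$. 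In either case, replacing each tile by the flat equilateral triangle on its corners and each edge-curve by its chord yields a locally finite monohedral triangulation, edge-to-edge because each edge-curve is a component of the intersection of exactly two tiles; in the first case this is a triangulation of $\Pi$ by equilateral triangles, i.e.\ the triangular tiling of the plane, and in the second a triangulation of $\Sigma$ by a single equilateral triangle. This gives alternatives~(1) and~(2).

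It remains to read off the normal directions. At a corner $X$ the surface normal lies on $\ell_X$, which is parallel to $v$ in the first case and passes through $O$ in the second, hence is orthogonal to $\Pi$, resp.\ radial on $\Sigma$. For an edge midpoint I would parametrise the edge curve $\gamma\colon[0,1]\to T$ so that its self-congruence $\rho$ satisfies $\gamma(1-t)=\rho(\gamma(t))$; since $\rho$ preserves the surface normal, the continuous unit normal field $\bbn$ along $\gamma$ satisfies $\bbn(1-t)=\rho_*\bbn(t)$, and at $t=\tfrac12$ this forces $\bbn(\tfrac12)$ to be fixed by the linear part $\rho_*$, which is a half-turn about the direction of the edge axis; so $\bbn(\tfrac12)$ is parallel to that axis, i.e.\ orthogonal to $\Pi$ (resp.\ radial on $\Sigma$). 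Fixing an orientation of $S$ then makes all of these vertex and edge-midpoint normals equal to the single vector $v$ in the planar case, which is the assertion of~(1).

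The whole argument is essentially bookkeeping once Lemmas~\ref{lemma:normals} and~\ref{lemma:normals2} are available, and no new geometric input is needed beyond what already appears in the proofs of Theorems~\ref{thm:type1} and~\ref{thm:type2}. The only points demanding care are minor: that the degenerate branch $\phi=\pi$ of Lemma~\ref{thm:two_rotations} has already been excluded from Types~1--4 by definition, that completeness of $S$ forces the flat picture to be the full plane-triangulation (resp.\ a closed triangulation of $\Sigma$) rather than a proper piece of one, and the half-turn computation for the edge-midpoint normals above. I do not expect a genuine obstacle beyond these.
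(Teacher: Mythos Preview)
Your proposal is correct and follows essentially the same route as the paper, which in fact gives no separate proof for Theorem~\ref{thm:type3} at all, merely writing ``Similarly we obtain'' after the proof of Theorem~\ref{thm:type2}. You carry out precisely that ``similarly'': invoke Lemmas~\ref{lemma:normals} and~\ref{lemma:normals2} (noting that for Type~3 the orthogonal subcase of Lemma~\ref{lemma:normals2} is excluded), then argue that the group generated by the three edge half-turns preserves either a family of parallel planes or a family of concentric spheres, forcing all vertices into one plane or sphere. Your explicit half-turn computation for the edge-midpoint normals is a detail the paper omits but which is needed for the full statement of item~(1).
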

Note that, if the interior of the tile is not symmetric, then the tiling possibilities described in Theorem \ref{thm:type3}
can be augmented by placing each tile in any of three positions.

The last case of $3$-edge monotilings is case $4$, and, by Theorem \ref{rigiditythm} on rigid tiles we 
immediately have:
\begin{theorem}  \label{thm:type4}
  Let $S$ be complete monotiled surface tiled by a 3-edge tile of
  Type~4.  Then  the group $G$ of rotations generated by the three
  rotations $R_\alpha$, 
  $R_\beta$ and $R_\gamma$, of angle $\pi$, around the symmetry axes of the three edges, acts transitively
  on the tiles of $S$. The composition $R=R_\beta R_\gamma R_\alpha$ is of finite order.
  \end{theorem}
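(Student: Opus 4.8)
The plan is to exploit the rigidity of a Type~4 tile (boundary type $\alpha\beta\gamma$), which was established in Lemma~\ref{vertextypes}: for a $3$-edge tile of this type every vertex has degree a multiple of three and there is a unique vertex type. First I would observe that a Type~4 tile is automatically rigid in the sense of the definition preceding Theorem~\ref{rigiditythm}. Indeed, each of the three edges $\alpha$, $\beta$, $\gamma$ is self-congruent, so there is a unique rigid motion $R_\alpha$, $R_\beta$, $R_\gamma$ — a rotation of angle $\pi$ about the axis through the midpoint of the corresponding edge, preserving the surface normal — that reflects the tile across that edge onto the adjacent tile. Since an interior tile of an oriented monotiling of a surface without boundary must be surrounded by copies glued edge-to-edge along $\alpha$, $\beta$, $\gamma$, these three motions are exactly the maps to the three adjacent tiles, and each is unique; hence the tile is rigid and Theorem~\ref{rigiditythm} applies verbatim.

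Next, invoking Theorem~\ref{rigiditythm}(4), the group $G$ of all tile-to-tile rigid motions acts transitively on the tiles and is generated by the rigid motions to the \emph{adjacent} tiles, which are precisely $R_\alpha$, $R_\beta$, $R_\gamma$. So the first assertion of the theorem is immediate. It only remains to check that each $R_e$ really is the rotation of angle $\pi$ about the edge's symmetry axis — this is the content of the remark following the notation paragraph in Section~4, that a self-congruent edge is symmetric under a unique involutive rigid motion, necessarily a rotation by $\pi$ about the axis through the edge's midpoint — and to show that $R := R_\beta R_\gamma R_\alpha$ has finite order.

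For the finite-order claim, I would look at the vertex of the tiling lying at the corner $A$ of $T$ (the vertex between edges $\beta$ and $\gamma$, say, with the convention from Figure~\ref{fig:triangle4cases}). Going once around this vertex, the tiles incident to it are obtained from $T$ by successively applying the edge-reflections in the two edges meeting at $A$; after traversing all $d$ tiles around the vertex (where $d$ is a multiple of three by Lemma~\ref{vertextypes}) one returns to $T$ with the identity map, since the tiling map $T\to T_k$ is unique and must therefore equal the identity when $T_k=T$. The composite of these $d$ edge-reflections about the two axes through $A$ is a rotation about the normal line $\ell_A$ at $A$; being equal to the identity as a map of the tile, and $\ell_A$ not lying in $T$, it is the identity rotation, so this rotation has order dividing $d$. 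Choosing the grouping of the reflections appropriately expresses a power of $R_\beta R_\gamma R_\alpha$ (equivalently a conjugate of a power of $R$) as such a vertex-rotation, so $R$ has finite order. The cleanest way to package this: the product $R_\beta R_\gamma R_\alpha$ fixes the corner shared by $\beta$ and $\gamma$ only up to the cyclic action around that vertex, and since $S$ is a surface without boundary every vertex has a finite, well-defined degree, forcing finite order. I expect the main obstacle to be bookkeeping: pinning down exactly which two of the three reflections generate the stabilizer-type rotation at a given corner, and matching the cyclic order of tiles around a vertex to the correct word in $R_\alpha, R_\beta, R_\gamma$, so that $R=R_\beta R_\gamma R_\alpha$ (and not merely some conjugate or inverse) is the one shown to be of finite order. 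Once the vertex-stabilizer rotation is identified, finiteness of its order is forced purely by the combinatorial fact that only finitely many tiles meet at each vertex.
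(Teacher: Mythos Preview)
Your proposal is correct and follows the same route the paper takes. The paper in fact gives no proof environment at all: it simply writes ``by Theorem~\ref{rigiditythm} on rigid tiles we immediately have'' before stating the result, and then remarks afterward that the (unique) vertex has degree $3k$ where $k$ is the order of $R$. You have correctly unpacked both halves of that one-line argument: a Type~4 tile is rigid because each self-congruent edge determines a unique half-turn to the adjacent tile, so Theorem~\ref{rigiditythm}(4) gives transitivity and generation by $R_\alpha,R_\beta,R_\gamma$; and finiteness of the order of $R$ comes from the finite vertex degree.

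One small bookkeeping correction, of exactly the kind you anticipated: the composite $R=R_\beta R_\gamma R_\alpha$ does not fix the corner $A$ between $\beta$ and $\gamma$. Tracking the vertex permutations ($R_\alpha$ swaps $B\leftrightarrow C$, $R_\gamma$ swaps $A\leftrightarrow B$, $R_\beta$ swaps $A\leftrightarrow C$) gives $R(C)=C$, so $R$ fixes the corner $C$ between $\alpha$ and $\beta$ and is a rotation about $\ell_C$. The iterates $R^j(T)$ are then distinct tiles meeting at $C$, and since only finitely many tiles meet there, rigidity (uniqueness of the map $T\to T$) forces $R$ to have finite order; this is precisely how the paper's remark identifies the vertex degree as $3k$.
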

Note  that a surface of Type $4$ has only one type of vertex, and this is of degree $3k$, where $k$ is the 
order of $R$.  Examples are a sphere (tiled by, for example, 4 triangles) where $R$ is of order $1$, and a plane or round cylinder, where $R$ is of order $2$.

\section{Conclusions and future directions}
We have shown that tileable surfaces are a genuinely distinct phenomenon from both tilings of constant curvature surfaces and monohedral (or $k$-hedral) polyhedral surfaces.   An important difference from tilings of space forms is that the choice of prototile (or prototile set) changes the \emph{shape} of the surface. This should have implications in terms of practical relevance.

In comparison with polyhedral surfaces, smooth tileable surfaces have the attractive feature that they look smooth, but they offer the challenge that they are more difficult to create, due to matching the normals along the edges.  We have so far left unexplored finite edge tilings with more than three edges, and tilings that are not finite edge type, as well as larger prototile sets. We expect that more interesting surfaces are obtainable when these are investigated.

There are many interesting problems for future research, and we list just a few here:
\begin{enumerate}
\item Given an admissible tile, what type of surface(s) does it produce (compact, complete, or incomplete). 
This is related to the curved  surface version  of the \emph{Heesch problem} \cite{heesch1968regulares} \cite{mann2004heesch} for plane tilings: given an admissible tile it can be surrounded at least once by a layer of tiles 
(a corona).  If the resulting collection cannot be completely surrounded, then the Heesch number is $1$.  In general, the Heesch number is the number of coronas that can be added.  Heesch's tiling problem is: given a positive integer $N$, does there exist a tile with Heesch number $N$? For tileable surfaces we could consider surfaces with or without self-intersections, which would modify the answer for these questions.
\item
It is known {\cite{smith2023aperiodic}} that for tilings of the plane one can find a monotile that generates non-repeating patterns.  
For curved tileable surfaces, it is easy to give examples of monotiles that generate non-repeating patterns if 
we consider tiles that are not of finite edge type.  Finding finite edge type non-repeating tiles that are not just deformed  versions of the planar versions may be an interesting problem.
\item 
Is there a simple criterion that tells you when a monohedral polyhedron can be smoothed into a tiled surface?
\item Finding useful algorithms for producing tileable surfaces with various global surface shapes would be very interesting for applications.  Especially, if we increase the number of prototiles, one can expect to generate a large variety of shapes. 
For applications in architecture, one is typically interested in a surface with boundary.  Various boundary problems related to tileable surfaces are of interest.
\end{enumerate}



\bibliographystyle{amsplain}
\bibliography{mybib}

\end{document}